\newtheorem{theorem}{Theorem}[section]
\newtheorem{prop}[theorem]{Proposition}
\newtheorem{lemma}[theorem]{Lemma}
\newtheorem{cor}[theorem]{Corollary}
\newtheorem{cons}{Construction}
\newtheorem{claim}[theorem]{Claim}
\newtheorem{case}{Case}
\newtheorem{question}{Question}
\newtheorem{property}[theorem]{Property}
\theoremstyle{definition}
\newtheorem{definition}[theorem]{Definition}
\theoremstyle{definition}
\newtheorem*{defn-non}{Definition}
\newenvironment{pf}[1][Proof]{%
    \begingroup
    \begin{proof}[#1]%
}{%
    \end{proof}
    \endgroup
}
\title{A directed Andr\'asfai-Erd\H{o}s-S\'os theorem and chromatic profiles of oriented cycles}
\author{Yisai Xue\thanks{School of Mathematics and Statistics, Ningbo University, Ningbo, China. Supported by the National Nature Science Foundation of China (No. 12501486). Email: xueyisai@nbu.edu.cn}}
\date{\today}
\begin{document}

\maketitle

\begin{abstract}
    The chromatic profile of a digraph $H$, denoted by $\delta_{\chi}^{+}(H,k)$, is the infimum $d$ such that any $H$-free digraph $D$ on $n$ vertices with minimum out-degree $\delta^{+}(D) \ge dn$ must be $k$-colorable. 
    We determine the exact chromatic profile for several fundamental classes of digraphs. 
    Our main result is a directed analogue of the Andr\'asfai-Erd\H{o}s-S\'os theorem, stating that $\delta_\chi^{+}(T_r, r-1)=\frac{3 r-7}{3 r-4}$, where $T_r$ is the transitive tournament on $r$ vertices. 
    We then determine the chromatic profile for directed odd cycles, showing that $\delta^+_\chi(\overrightarrow{C}_{2\ell+1},2)=1/2$ for all $\ell\ge 1$. 
    Finally, we resolve the profile for the three remaining orientations of the pentagon, establishing that $\delta_{\chi}^{+}(C_{5}',2)=\delta_{\chi}^{+}(C_{5}'',2)=\delta_{\chi}^{+}(C_{5}''',2)=1/3$.
\end{abstract}

\section{Introduction}

The celebrated Andr\'asfai-Erd\H{o}s-S\'os theorem \cite{andrasfai1974connection} states that for $r \ge 3$, any $n$-vertex $K_r$-free graph $G$ with minimum degree $\delta(G) > \frac{3r-7}{3r-4}n$ is $(r-1)$-colorable.
This bound is sharp (see \cref{fig:AES}).
The general problem of determining the minimum degree threshold that guarantees a certain chromatic number in $H$-free graphs is known as  the chromatic profile problem.
Formally, the \textit{chromatic profile of a graph} $H$ is defined as
\begin{align*}
\delta_\chi(H, k):=\inf \{d: \delta(G) \geq d|G| \text { and } H \not\subseteq G \Rightarrow \chi(G) \leq k\}.
\end{align*}

The case of triangle-free graphs has been extensively studied. It is known from the work of Andr\'asfai, Erd\H{o}s and S\'os \cite{andrasfai1974connection}, Brandt and Thomass\'e \cite{brandt2011dense}, H\"aggkvist \cite{haggkvist1982odd} and Jin \cite{jin1995triangle} that
$$
\delta_\chi(K_3, 2)=\frac{2}{5}, \quad \delta_\chi(K_3, 3)=\frac{10}{29} \quad \text { and } \quad \delta_\chi(K_3, k)=\frac{1}{3} \quad \text { for every } k \geq 4.
$$

Goddard and Lyle \cite{goddard2011dense}, and independently Nikiforov \cite{nikiforov2010chromatic} extended these results by showing that $\delta_\chi(K_r, r)=\frac{19r-47}{19r-28}$ and $\delta_\chi(K_r, r+1)=\frac{2r-5}{2r-3}$.
For odd cycles, H\"aggkvist \cite{haggkvist1982odd} showed that $\delta_\chi(C_{2\ell+1},2)=\frac{2}{2\ell+3}$ for $1\le \ell\le 4$.
More recently, Yuan and Peng \cite{yuan2024minimum} established that $\delta_\chi(C_{2\ell+1},2)=\frac{1}{6}$ for $\ell\ge 5$.
The problem for an arbitrary graph $H$ was also posed by Erd\H{o}s and Simonovits \cite{erdos1973valence}, who described it as ``too complicated" in its full generality.
For more related results, see \cite{allen2013chromatic,bottcher2023graphs,bourneuf2025dense,huang2025interpolating,Illingworth2022-2,Illingworth2022-1,kim2025stability,liu2024andr,liu2024positive,liu2024beyond,yan2024chromatic}.

The chromatic profile problem connects extremal and coloring theory by asking how a ``local'' density condition (minimum degree), rather than a ``global'' one, constrains the chromatic number of an $H$-free graph. This framework, exemplified by the Andr\'asfai-Erd\H{o}s-S\'os theorem, studies the stability of chromatic properties in graphs that are dense but not necessarily extremal.

The extension to digraphs introduces two primary challenges: the proliferation of forbidden structures due to their various orientations, and the structural asymmetries that demand new proof techniques.

All digraphs considered in this paper are finite, may contain anti-parallel arcs (i.e., directed 2-cycles), but have no loops or parallel arcs. 
The \emph{chromatic profile of a digraph $H$} is defined as:
\begin{align}\label{eq:profile}
\delta^+_\chi(H, k):=\inf \{d: \delta^+(D) \geq d|D| \text { and } H \not\subseteq D \Rightarrow \chi(D) \leq k\}.
\end{align}
Here, $\delta^{+}(D)$ denotes the minimum out-degree of the digraph $D$. 
Our first result generalizes the Andr\'asfai-Erd\H{o}s-S\'os theorem to the directed setting by determining the profile of the transitive tournament $T_r$ on $r$ vertices.

\begin{theorem}\label{thm:1.1}
    $\delta^+_\chi(T_r,r-1)=\frac{3r-7}{3r-4}$.
\end{theorem}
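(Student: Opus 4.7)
The plan is to prove matching lower and upper bounds for $\delta^{+}_{\chi}(T_r,r-1)$.

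For the lower bound, I would symmetrize the classical Andr\'asfai-Erd\H{o}s-S\'os extremal graph. Sharpness of the classical AES theorem yields, for each $r$ and arbitrarily large $N$, a $K_r$-free graph $H$ on $N$ vertices with $\chi(H) \geq r$ and $\delta(H) \geq \frac{3r-7}{3r-4} N$. Form a digraph $D_H$ by replacing every edge of $H$ with a pair of oppositely directed arcs. Then $\delta^{+}(D_H) = \delta(H)$, $\chi(D_H) = \chi(H) \geq r$, and $D_H$ is $T_r$-free because any $T_r$ embedding would require its $r$ vertices to induce a $K_r$ in $H$. This gives $\delta^{+}_{\chi}(T_r,r-1) \geq \frac{3r-7}{3r-4}$.

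For the upper bound I would proceed by induction on $r$. In the base case $r = 3$, I first show the underlying graph $G$ of $D$ is triangle-free and then invoke classical AES. If $U = \{u_1, u_2, u_3\}$ is a triangle in $G$, any bidirectional edge in $U$ combined with the two other arcs produces a $T_3$ (each orientation of the remaining two edges yields a transitive ordering using the bidirectional arc). So all three edges are strict single arcs, and $T_3$-freeness forces them to form a directed 3-cycle. For any $v \notin U$ receiving arcs from two vertices $u_i, u_j \in U$ with the strict arc $u_i \to u_j$, the triple $\{u_i, u_j, v\}$ realises a $T_3$; hence every $v \notin U$ receives at most one arc from $U$. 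This gives at most $n - 3$ external out-arcs and $3$ internal cycle arcs, so $\sum_{u \in U} d^{+}(u) \leq n$, contradicting $\sum d^{+} > \frac{6n}{5}$.

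For the inductive step with $r \geq 4$, assume the statement for all smaller values. Since $\frac{3r-7}{3r-4} > \frac{3r-10}{3r-7}$, if $D$ is moreover $T_{r-1}$-free, the induction hypothesis immediately yields $\chi(D) \leq r-2$. So I may assume $D$ contains a $T_{r-1}$ on vertices $v_1, \dots, v_{r-1}$, in which case $T_r$-freeness forces $\bigcap_{i=1}^{r-1} N^{+}(v_i) = \emptyset$. The strategy is to apply the inductive hypothesis to the $T_{r-1}$-free subdigraph $D[N^{+}(v_1)]$ to obtain an $(r-2)$-coloring, then extend it to an $(r-1)$-coloring of $D$ by absorbing the source $v_1$ and the small residual set $V \setminus (\{v_1\} \cup N^{+}(v_1))$ using the empty common out-neighborhood constraint.

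The hard part is the inductive step, for two related reasons. First, unlike the $r = 3$ case, for $r \geq 4$ the underlying graph of a $T_r$-free digraph may contain $K_r$ (for instance, a $K_4$ with one vertex dominating a directed 3-cycle on the other three). So the proof cannot simply establish $K_r$-freeness of the underlying graph and reduce to classical AES by a counting argument. Second, the natural induction on $D[N^{+}(v)]$ is on a knife edge: the bound $\delta^{+}(D[N^{+}(v)]) \geq 2\delta^{+}(D) - n > \frac{3r-10}{3r-4} n$ lies just on the wrong side of the threshold $\frac{3r-10}{3r-7} |N^{+}(v)|$ required by the inductive hypothesis when $|N^{+}(v)|$ saturates. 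Pushing through this borderline is where I expect the real work to lie, most likely via a stability-type argument (showing that near-saturation of the min out-degree forces $D$ to resemble the symmetrized extremal graph, where $K_r$-freeness of the underlying is recovered) or a direct homomorphism onto a small ``directed Andr\'asfai'' template.
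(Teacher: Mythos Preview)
Your lower bound and your base case $r=3$ are both correct. The $r=3$ argument (reduce to triangle-freeness of the underlying graph by counting out-arcs from a directed triangle, then invoke classical Andr\'asfai--Erd\H{o}s--S\'os) is in fact cleaner than what the paper does for that case.

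However, in the inductive step you misdiagnose where the difficulty lies. The induction on $D[N^{+}(v)]$ is \emph{not} on the wrong side of the threshold. Using the sharper bound $\delta^{+}(D[N^{+}(v)]) \ge \delta^{+}(D) - (n - d^{+}(v))$ rather than the uniform $2\delta^{+}(D)-n$, the requirement $\delta^{+}(D[N^{+}(v)]) > \frac{3r-10}{3r-7}\, d^{+}(v)$ rearranges to $\frac{3}{3r-7}\, d^{+}(v) > n - \delta^{+}(D)$; since $d^{+}(v) \ge \delta^{+}(D) > \frac{3r-7}{3r-4}n$, the left side exceeds $\frac{3}{3r-4}n$ while the right side is strictly below $\frac{3}{3r-4}n$. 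So the induction goes through cleanly and yields $\chi(D[N^{+}(v)]) \le r-2$ for every $v$; the paper carries out exactly this computation.

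The genuine gap is the step you pass over: extending the $(r-2)$-colouring of $N^{+}(v_1)$ to an $(r-1)$-colouring of $D$. Your plan to ``absorb'' the residual set $R = V(D) \setminus N^{+}[v_1]$ into one new colour does not work, because $R$ (of size up to roughly $\frac{3}{3r-4}n$) need not be independent, and the constraint $\bigcap_i N^{+}(v_i) = \varnothing$ gives no control over arcs inside $R$ or between $R$ and the existing colour classes. Nothing here prevents $\chi(D)$ from being large. The paper closes this gap by first passing to the $T_r$-saturated supergraph $\hat{D}$ and then splitting on whether $G(\hat{D})$ is complete multipartite. In the multipartite case the colour classes of $D[N^{+}(v)]$ are forced to align with the global partition, which pins down their sizes and produces a small class consisting entirely of sources, contradicting $\chi(\hat D)\ge r$. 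In the non-multipartite case, saturation produces a ``5-wheel-like'' gadget $\overrightarrow{W}_{r,t}$ (two $T_{r-2}$'s overlapping in $t$ vertices, with a common apex and a base edge); maximising $t$ and double-counting arcs out of the gadget forces $\delta^{+}(\hat D) \le \frac{3r-7}{3r-4}n$. This second case is the directed analogue of Brandt's 5-wheel argument and is where the precise threshold actually emerges; your proposal has no counterpart to it, and neither of your suggested fallbacks (stability, or a directed Andr\'asfai template) is how the paper proceeds.
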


Notably, by replacing each edge in an undirected graph with a pair of anti-parallel arcs, one can recover the Andr\'asfai-Erd\H{o}s-S\'os theorem as a direct corollary of \cref{thm:1.1}.
Moreover, as an application, we obtain the following stability theorem.

\begin{theorem}\label{thm:app-of-AES}
    Let $r \geq 3, t \geq 1$ be integers and $\varepsilon>0$. Let $D$ be a $T_{r}[t]$-free digraph on $n$ vertices with minimum out-degree $\delta^{+}(D) \geq(\frac{3 r-7}{3 r-4}+\varepsilon) n$. If $n$ is sufficiently large, then one can delete $o(n^2)$ arcs to make $D$ an $(r-1)$-partite digraph.
\end{theorem}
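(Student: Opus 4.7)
The plan is to derive this stability statement from \cref{thm:1.1} through a standard regularity-method argument. I would apply the directed Szemer\'edi regularity lemma to $D$ with parameter $\varepsilon' \ll \varepsilon$, obtaining an equipartition $V_0,V_1,\ldots,V_M$ with $|V_0|\le \varepsilon' n$ and $|V_1|=\cdots=|V_M|=m$. Fixing a density threshold $d$ with $\varepsilon' \ll d \ll \varepsilon$, I define the reduced digraph $R$ on vertex set $[M]$ by placing an arc $(i,j)$ whenever $(V_i,V_j)$ is an $\varepsilon'$-regular pair and the density of arcs directed from $V_i$ to $V_j$ is at least $d$.

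A standard cleaning step then shows that after moving at most $o(n)$ vertices into $V_0$, the reduced digraph satisfies $\delta^+(R)\ge \bigl(\tfrac{3r-7}{3r-4}+\tfrac{\varepsilon}{2}\bigr)M$; the losses come from vertices incident to many non-regular pairs, vertices whose out-neighbors lie mostly in regular pairs of density below $d$, and vertices whose out-neighbors lie mostly inside their own cluster. Moreover, $R$ must be $T_r$-free, since any transitive $T_r$ in $R$ corresponds to $r$ clusters joined pairwise by dense regular pairs in the prescribed orientation, and an application of the directed counting/embedding lemma would then produce a copy of $T_r[t]$ in $D$, contradicting the hypothesis. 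Consequently \cref{thm:1.1} yields a proper $(r-1)$-coloring $c\colon V(R)\to[r-1]$.

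Lifting the coloring to $D$ by placing every vertex of $V_i$ into part $c(i)$ (and distributing $V_0$ arbitrarily) partitions $V(D)$ into at most $r-1$ classes. Arcs lying inside a single part can only arise from: (i) arcs incident to $V_0$, contributing $O(\varepsilon' n^2)$; (ii) arcs in non-regular pairs, contributing $O(\varepsilon' n^2)$; (iii) arcs in regular pairs of density at most $d$ whose endpoints happen to receive the same color, contributing $O(d n^2)$; and (iv) arcs inside individual clusters, contributing $O(n^2/M)$. Choosing the parameters so that $\varepsilon',1/M \ll d \ll \varepsilon$ makes the total $o(n^2)$, as required.

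The main obstacle is the clean transfer of the minimum out-degree condition from $D$ to $R$: one must verify that the cleaning sweeps up only $o(n)$ vertices while preserving enough slack to satisfy the strict inequality in the hypothesis of \cref{thm:1.1}, which is the directed version of the subtle point that already appears in stability proofs of the ordinary Andr\'asfai-Erd\H{o}s-S\'os theorem. A secondary technical point is supplying the correct directed embedding lemma to turn a $T_r$ in $R$ into the blow-up $T_r[t]$ inside $D$; this needs the cluster size $m$ to exceed a constant depending on $r$, $t$, and $d$, which holds once $n$ is sufficiently large.
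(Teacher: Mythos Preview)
Your proposal is correct and follows essentially the same approach as the paper: apply the diregularity lemma, observe that the reduced digraph $R$ inherits the minimum out-degree condition and is $T_r$-free by the embedding lemma, invoke \cref{thm:1.1} to $(r-1)$-color $R$, and lift the coloring while bounding the arcs lost to the exceptional set, irregular pairs, sparse pairs, and intra-cluster arcs. The only cosmetic difference is that the paper quotes a degree form of the diregularity lemma directly, bypassing the cleaning step you describe.
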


 Our second result determines the 2-coloring profile for directed odd cycles.

\begin{theorem}\label{thm:directed-cycles}
   For every $\ell\ge 1$, we have $\delta^+_\chi(\overrightarrow{C}_{2\ell+1},2)=\frac{1}{2}$.
\end{theorem}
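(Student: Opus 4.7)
The plan is to prove the two inequalities separately.

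\textbf{Lower bound ($\delta^+_\chi\ge 1/2$).} For every even $n$, take $V=A\cup B$ with $|A|=|B|=n/2$, fix distinct $u,v\in A$, and form $D_n$ by including the arc $u\to v$, every arc $a\to b$ with $a\in A,\,b\in B$, and every arc $b\to a'$ with $b\in B,\,a'\in A\setminus\{u\}$. Then $u$ has in-degree zero, so every directed cycle of $D_n$ lies in $D_n-u$; the latter is the complete anti-parallel bipartite digraph between $A\setminus\{u\}$ and $B$, hence bipartite and contains no directed odd cycle. Thus $\overrightarrow{C}_{2\ell+1}\not\subseteq D_n$. The triangle $\{u,v,b\}$ for any $b\in B$ lies in the underlying graph, so $\chi(D_n)\ge 3$; and a direct count gives $\delta^+(D_n)=n/2-1$. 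Letting $n\to\infty$ yields $\delta^+_\chi(\overrightarrow{C}_{2\ell+1},2)\ge 1/2$.

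\textbf{Upper bound ($\delta^+_\chi\le 1/2$).} I will show that the hypothesis ``$\delta^+(D)>n/2$ and $\overrightarrow{C}_{2\ell+1}\not\subseteq D$'' is vacuous, whence $\chi(D)\le 2$ trivially. For $\ell=1$ this follows from a short counting argument: $\overrightarrow{C}_3$-freeness forces $N^+(v)\cap N^-(u)=\emptyset$ for every arc $u\to v$, so $|N^-(u)|\le n-|N^+(v)|<n/2$ for every $u$. Summing over $u$ yields $\sum_u d^-(u)<n^2/2<\sum_u d^+(u)$, contradicting $\sum d^+=\sum d^-$.

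For $\ell\ge 2$ the plan is analogous: show $\delta^+(D)>n/2$ alone forces $\overrightarrow{C}_{2\ell+1}\subseteq D$. By the $\ell=1$ case $D$ contains a $\overrightarrow{C}_3$. An averaging argument ($\sum d^-=\sum d^+>n^2/2$) guarantees a vertex $a^*$ with $d^-(a^*)>n/2$; combining with $d^+(a^*)>n/2$ and an inclusion-exclusion, one can produce a triangle $a^*\to b\to c\to a^*$ through $a^*$. Starting from this triangle, I iteratively extend the cycle length by $2$: for a current cycle $a^*\to v_1\to\cdots\to v_m\to a^*$, locate an anti-parallel pair $\{w,w'\}$ disjoint from the cycle with $v_m\to w$ and $w'\to a^*$, and replace the arc $v_m\to a^*$ by the path $v_m\to w\to w'\to a^*$. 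After $\ell-1$ iterations the cycle reaches length $2\ell+1$, yielding the required $\overrightarrow{C}_{2\ell+1}$.

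\textbf{Main obstacle.} Producing the splicing pair $(w,w')$ is the delicate point: while $v_m\to w$ is abundant from $|N^+(v_m)|>n/2$, the arc $w'\to a^*$ is an in-arc and not directly controlled by $\delta^+$. My workaround is the choice of $a^*$ with $d^-(a^*)>n/2$ as the fixed ``hub'' at which every splice occurs. Combined with the bound $\sum_v|N^+(v)\cap N^-(v)|\ge 2|E|-n(n-1)\ge n$ on ordered anti-parallel pairs in $D$, a double count over the pairs $(w,w')$ with $w\in N^+(v_m)$ and $w'\in N^-(a^*)$ shows that viable pairs exist in number $\Omega(n)$; excluding the $O(\ell)$ pairs meeting the current cycle leaves ample room for all $\ell-1$ splices provided $n$ is sufficiently large relative to $\ell$. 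The invariant ``$a^*$ lies on the current cycle'' is preserved automatically since the splice does not disturb $a^*$.
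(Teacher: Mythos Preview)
Your lower-bound construction and your $\ell=1$ argument are both correct (the construction differs from the paper's $A_n$ but achieves the same asymptotic bound, and the $\overrightarrow{C}_3$ counting is a nice self-contained touch).

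The gap is in the splicing step for $\ell\ge 2$. First, a minor point: the literal assertion ``$\delta^+(D)>n/2$ forces $\overrightarrow{C}_{2\ell+1}\subseteq D$'' is false, as the complete digraph on $2\ell$ vertices shows; for the chromatic profile you should work with $\delta^+(D)\ge(\tfrac12+\varepsilon)n$ and allow $n$ large (the paper needs $\delta^+\ge(n+2\ell-1)/2$, which is the same thing). Second, and more seriously, the anti-parallel-pair double count does not deliver what you claim. Knowing that $\sum_v|N^+(v)\cap N^-(v)|\ge n$ globally tells you nothing about whether any such pair sits in $N^+(v_m)\times N^-(a^*)$: those $\sim n$ ordered pairs could all avoid that product entirely. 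So the sentence ``viable pairs exist in number $\Omega(n)$'' is unsupported, and with only $d^-(a^*)>n/2$ the direct bound $|N^+(w)\cap N^-(a^*)|>0$ is too weak to dodge the $O(\ell)$ vertices already on the cycle.

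The fix is simple once you drop the anti-parallel detour: averaging in fact gives $d^-(a^*)\ge(\tfrac12+\varepsilon)n$ (not merely $>n/2$), whence for every $w$ one has $|N^+(w)\cap N^-(a^*)|\ge 2\varepsilon n$, which is ample for all $\ell-1$ splices. The paper's route is cleaner still: having picked $u$ with $d^-(u)\ge(n+2\ell-1)/2$, it notes that \emph{inside} $N^-(u)$ every vertex retains out-degree at least $2\ell-1$, so one greedily builds a directed path $v_1\to\cdots\to v_{2\ell}$ entirely within $N^-(u)$ (starting from some $v_1\in N^+(u)\cap N^-(u)$) and closes it through $u$. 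No iteration, no hub bookkeeping.
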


\begin{figure}[ht]
\centering
\resizebox{0.7\linewidth}{!}{%
\begin{tikzpicture}[scale=1] 
  \coordinate (A) at (0,0); 
  \coordinate (B) at (2,0); 
  \coordinate (C) at (2.618,1.902); 
  \coordinate (D) at (1,3.078); 
  \coordinate (E) at (-0.618,1.902); 

  \draw[-latex, red, line width=1.501pt] (A) -- (E);
  \draw[-latex, red, line width=1.501pt] (E) -- (D);
  \draw[-latex, red, line width=1.501pt] (D) -- (C);
  \draw[-latex, red, line width=1.501pt] (C) -- (B);
  \draw[-latex, red, line width=1.501pt] (B) -- (A);

  \foreach \i in {A,B,C,D,E}
   \draw[fill=white, line width=0.701pt] (\i) circle (3pt);

  \coordinate (A1) at (0+5.0001,0); 
  \coordinate (B1) at (2+5.0001,0); 
  \coordinate (C1) at (2.618+5.0001,1.902); 
  \coordinate (D1) at (1+5.0001,3.078); 
  \coordinate (E1) at (-0.618+5.0001,1.902); 

  \draw[-latex, red, line width=1.501pt] (A1) -- (E1);
  \draw[-latex, red, line width=1.501pt] (E1) -- (D1);
  \draw[-latex, red, line width=1.501pt] (D1) -- (C1);
  \draw[-latex, red, line width=1.501pt] (C1) -- (B1);
  \draw[-latex, blue, line width=1.501pt] (A1) -- (B1);

  \foreach \i in {B1,C1,E1,A1,D1}
   \draw[fill=white, line width=0.701pt] (\i) circle (3pt);

  \coordinate (A2) at (0+10.0001,0); 
  \coordinate (B2) at (2+10.0001,0); 
  \coordinate (C2) at (2.618+10.0001,1.902); 
  \coordinate (D2) at (1+10.0001,3.078); 
  \coordinate (E2) at (-0.618+10.0001,1.902); 

  \draw[-latex, red, line width=1.501pt] (A2) -- (E2);
  \draw[-latex, red, line width=1.501pt] (E2) -- (D2);
  \draw[-latex, red, line width=1.501pt] (D2) -- (C2);
  \draw[-latex, blue, line width=1.501pt] (B2) -- (C2);
  \draw[-latex, blue, line width=1.501pt] (A2) -- (B2);

  \foreach \i in {A2,B2,C2,D2,E2}
   \draw[fill=white, line width=0.701pt] (\i) circle (3pt);

  \coordinate (A3) at (0+15.0001,0); 
  \coordinate (B3) at (2+15.0001,0); 
  \coordinate (C3) at (2.618+15.0001,1.902); 
  \coordinate (D3) at (1+15.0001,3.078); 
  \coordinate (E3) at (-0.618+15.0001,1.902); 

  \draw[-latex, red, line width=1.501pt] (A3) -- (E3);
  \draw[-latex, red, line width=1.501pt] (E3) -- (D3);
  \draw[-latex, blue, line width=1.501pt] (C3) -- (D3);
  \draw[-latex, red, line width=1.501pt] (C3) -- (B3);
  \draw[-latex, blue, line width=1.501pt] (A3) -- (B3);

  \foreach \i in {A3,B3,C3,D3,E3}
   \draw[fill=white, line width=0.701pt] (\i) circle (3pt);

   \coordinate (a) at (1,-0.8);
   \node[above]  at (a) {$\overrightarrow{C}_5$};
   \coordinate (b) at (6,-0.8);
       \node[above]  at (b) {$C'_5$};
   \coordinate (c) at (11,-0.8);
   \node[above]  at (c) {$C''_5$};
   \coordinate (d) at (16,-0.8);
   \node[above]  at (d) {$C'''_5$};
\end{tikzpicture}
}
\caption{Orientations of $C_5$}
\label{fig:Orientations of C5}
\end{figure}
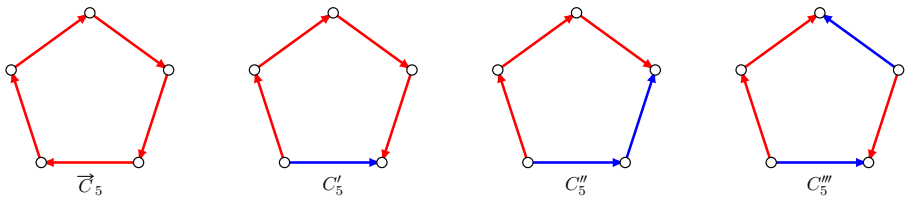

Finally, we investigate the chromatic profile for orientations of the pentagon $C_5$. There are four non-isomorphic orientations of the pentagon, depicted in \cref{fig:Orientations of C5}.
While \cref{thm:directed-cycles} addresses the profile for $\overrightarrow{C}_{5}$, our final result resolves the 2-coloring profiles for the three remaining cases.

\begin{theorem}\label{thm:C5}
    $\delta^+_\chi(C'_5,2)=\delta^+_\chi(C''_5,2)=\delta^+_\chi(C'''_5,2)=\frac{1}{3}$.
\end{theorem}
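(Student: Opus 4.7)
The theorem has matching lower and upper bounds, which I would establish separately for each of the three orientations.

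\textbf{Lower bound.} I would exhibit for each $H\in\{C_5',C_5'',C_5'''\}$ an $n$-vertex $H$-free digraph $D$ with $\chi(D)\ge 3$ and $\delta^+(D)\ge(1/3-o(1))n$. The natural candidate is the balanced blow-up of $\overrightarrow{C}_3$ with classes $V_0,V_1,V_2$ and arcs $V_i\to V_{i+1\bmod 3}$, which has $\chi=3$ and $\delta^+=n/3$. For $C_5''$ and $C_5'''$ a parity-modulo-$3$ argument forbids every embedding: every directed $k$-path beginning in $V_a$ ends in $V_{a+k\bmod 3}$, and the two directed source-to-sink paths of $C_5''$ (of lengths $2$ and $3$), respectively the double-source/double-sink structure of $C_5'''$ (which forces the sink vertex $2$ to lie in $V_{a+1}$ via the arc from $1\in V_a$ and in $V_{b+1}$ via the arc from $3\in V_b$, while $4$ must lie simultaneously in $V_{a+2}$ and $V_{b+1}$), force two class assignments that are incompatible modulo~$3$. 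For $C_5'$ the parity argument collapses, since its two source-to-sink paths have lengths $1$ and $4$, both $\equiv 1\pmod 3$; here I would instead sparsify one bipartite layer $V_a\to V_{a+1}$ of the $\overrightarrow{C}_3$ blow-up so as to destroy the oriented $P_4$ pattern ``$5\leftarrow 1\to 2\leftarrow 3$'' that any $C_5'$ embedding necessarily uses in that layer, while losing only $o(n)$ arcs per vertex.

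\textbf{Upper bound.} Fix $H\in\{C_5',C_5'',C_5'''\}$ and let $D$ be an $H$-free digraph on $n$ vertices with $\delta^+(D)>n/3$. Assume toward contradiction that the underlying graph $G(D)$ has chromatic number at least $3$, and take a shortest odd cycle $C=v_0v_1\cdots v_{2\ell}$ in $G(D)$. The shortest-cycle condition forbids chords among cycle-vertices at distance $\ge 2$ on $C$, while $\delta^+(v_i)>n/3$ for every $i$ forces any three out-neighborhoods $N^+(v_{i-1}),N^+(v_i),N^+(v_{i+1})$ to overlap: their total size exceeds $n$, so at least two of them share a common vertex $w$ off $C$. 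Using $w$ together with a carefully chosen block of cycle-vertices, I would locate five vertices realizing the arcs required by $H$. The argument splits according to whether $|C|=3$ or $|C|\ge 5$, and for each orientation the source/sink pattern of $H$ must be matched to the local orientations of the arcs on $C$ and those joining $C$ to $w$; the case $|C|\ge 5$ draws the five vertices from $C$ itself (suitably augmented), while $|C|=3$ uses the triangle plus two overlap vertices.

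\textbf{Main obstacle.} The delicate step is the $C_5'$ lower-bound construction, where the clean parity argument disappears and one must produce a $3$-partite digraph with $\chi\ge 3$ and $\delta^+\sim n/3$ whose one sparsified bipartite layer suppresses the required $P_4$ without sacrificing out-degree — a balance that requires care since $P_4$-free bipartite graphs on balanced vertex sets are very sparse in general. On the upper-bound side, the main work is the orientation-sensitive extraction of $H$ near the shortest odd cycle, since each of $C_5',C_5'',C_5'''$ has a distinct source/sink signature and must be handled with its own sub-case analysis of the arcs meeting $C$.
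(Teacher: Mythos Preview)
Your lower bound for $C_5''$ and $C_5'''$ via the balanced $\overrightarrow{C}_3$ blow-up is exactly what the paper does. The gap is in your $C_5'$ lower bound. Sparsifying a \emph{single} layer $V_a\to V_{a+1}$ cannot kill all copies of $C_5'$: by the rotational symmetry of $\overrightarrow{C}_3$, the source vertex of a $C_5'$ can be placed in any $V_i$, and the offending oriented $P_4$ then lives in the layer $V_i\to V_{i+1}$. So you would have to sparsify \emph{every} layer. But within a single bipartite layer all arcs point the same way, so forbidding the oriented pattern $5\leftarrow 1\to 2\leftarrow 3$ is the same as forbidding an underlying $P_4$; a $P_4$-free bipartite graph is a disjoint union of stars, and you cannot give every vertex on one side out-degree $\sim n/3$ into a set of size $n/3$ while keeping the other side's in-degrees equal to~$1$. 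Your own caveat (``$P_4$-free bipartite graphs on balanced vertex sets are very sparse'') is fatal, not just delicate. The paper abandons the $\overrightarrow{C}_3$ template entirely for $C_5'$ and instead uses an asymmetric construction: two apex vertices $u,v$ with $u\to v$, both dominating a set $X$, with $X\to Y$ and $Y\leftrightarrow Z$ (all parts of size $\approx n/3$). Every $5$-cycle in the underlying graph is forced through the edge $uv$ and has the shape $u\!-\!v\!-\!x_1\!-\!y\!-\!x_2\!-\!u$, whose orientation is $C_5''$, never $C_5'$.

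Your upper-bound sketch is in the right spirit and overlaps substantially with the paper's argument: the paper also pivots on the odd girth of $G(D)$, disposes of $g_{\mathrm{odd}}\ge 7$ by a short double-count, and in the $g_{\mathrm{odd}}=5$ case repeatedly replaces a cycle vertex by a common out-neighbour of its two neighbours on the cycle (this is exactly the ``overlap vertex'' mechanism you describe) to morph an arbitrary oriented $5$-cycle into each of $C_5',C_5'',C_5'''$. Where you underestimate the work is the triangle case for $C_5'''$. ``Triangle plus two overlap vertices'' handles $C_5''$ cleanly and, with one more step, $C_5'$; but for $C_5'''$ the paper needs a genuine structural argument: assuming $C_5'''$-freeness, it shows that for every vertex $w$ exactly one of $N^+(w)\cap U_1$, $N^+(w)\cap U_2$ is empty (where $U_1,U_2$ are the near-disjoint out-neighbourhoods of the endpoints of a suitable arc), propagates this parity along edges to force $G(D)-\{u_1,u_2\}$ bipartite, and then derives a contradiction from the existence of a triangle through $u_1$ or $u_2$. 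This is several claims' worth of work that your sketch does not anticipate.
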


The remainder of this paper is organized as follows. In \cref{sec2}, we prove our main result, a directed analogue of the Andr\'asfai-Erd\H{o}s-S\'os theorem, which determines the chromatic profile for transitive tournaments. \cref{sec3} is devoted to directed odd cycles, where we establish their 2-coloring profile. In \cref{sec4}, we complete the picture for pentagon orientations by resolving the 2-coloring profiles for the remaining three cases. We then present an application of our main theorem in \cref{sec5}, proving a stability theorem. Finally, in \cref{sec6}, we conclude with a discussion of related concepts and pose several open questions for future research.

\medskip
\paragraph{Notations.} 
    We write $u \to v$ or $(u,v)$ to denote an arc from $u$ to $v$, and say that $u$ \textit{dominates} $v$. 
    For a vertex $v$ in a digraph $D$, its \textit{in-neighbourhood} $N_{D}^{-}(v)$ is the set $\{u \in V(D) : u \to v\}$, and its \textit{out-neighbourhood} $N_{D}^{+}(v)$ is the set $\{u \in V(D) : v \to u\}$. 
    The \textit{in-degree} $d_{D}^{-}(v)$ and \textit{out-degree} $d_{D}^{+}(v)$ are the sizes of these sets, respectively. 
    A vertex of in-degree zero is called a \textit{source}; one of out-degree zero a \textit{sink}.
    We omit the subscript $D$ when the context is clear. 
    For $v\in V(D)$ and $S\subseteq V(D)$, let $d^+(v,S)$ denote the number of arcs from $v$ to $S$.
    Similarly, let $d^+(S,v)$ denote the number of arcs from $S$ to $v$.

    For two graphs $G$ and $H$, the \emph{join} $G \vee H$ is formed by taking disjoint copies of $G$ and $H$, and adding all edges between $V(G)$ and $V(H)$.  
    The $s$-blowup of a graph $G$, denoted by $G[s]$, is obtained by replacing each vertex $v \in V(G)$ with an independent set $I_v$ of size $s$, and adding all edges between $I_u$ and $I_v$ whenever $uv \in E(G)$.
    For $S \subseteq V(D)$, $D[S]$ denotes the subgraph induced by $S$. 
    The \textit{underlying graph} $G(D)$ of a digraph $D$ is the simple graph on $V(D)$ where $\{u,v\}$ is an edge if and only if $(u,v)$ or $(v,u)$ is an arc in $D$.
    For a graph $G$, we let $\overleftrightarrow{G}$ denote its double orientation, where each edge $uv$ is replaced by the pair of arcs $(u,v)$ and $(v,u)$. 
    
    The \textit{odd girth} $g_{\text{odd}}(D)$ of a digraph $D$ is the length of a shortest odd cycle in its underlying graph $G(D)$. 
    The \textit{independence number} $\alpha(D)$ is the size of a largest independent set in $G(D)$. 
    A \textit{tournament} is an orientation of a complete graph; it is \textit{transitive} if the arc relation is transitive, i.e., $((a \rightarrow b)$ and $(b \rightarrow c)) \Rightarrow(a \rightarrow c)$.    
    A \textit{homomorphism} from a digraph $D$ to a digraph $H$ is a mapping $\phi:V(D)\rightarrow V(H)$ such that if $(u,v)$ is an arc in $D$, then $(\phi(u),\phi(v))$ is an arc in $H$, denoted by $D\xrightarrow{\textup{hom}} H$.
    We use $[k]$ to denote the set $\{1, \dots, k\}$.
    For clarity, we omit floor and ceiling signs when they are not essential to the argument.

\section{A Directed Analogue of the Andr\'asfai-Erd\H{o}s-S\'os Theorem}\label{sec2}

We begin by recalling two elementary properties of transitive tournaments that follow directly from the definition. 
\begin{property}\label{property}
Let $T_r$ be a transitive tournament on $r$ vertices.
    \begin{itemize}
        \item[\rm (i)] Every vertex-induced subgraph of $T_r$ is a transitive tournament; 
        \item[\rm (ii)] Adding a new vertex $v$ and arcs such that $v$ dominates all vertices in $V(T_r)$, or is dominated by all of them, results in a transitive tournament on $r+1$ vertices.
    \end{itemize}
\end{property}

To prove \cref{thm:1.1}, we first handle the special case where the underlying graph is complete multipartite.

\begin{lemma}\label{lmm:multipartite}
   Let $D$ be an $n$-vertex digraph whose underlying graph $G(D)$ is complete $s$-partite with $s\ge r$. If $\delta^{+}(D)>\frac{3r-7}{3r-4}n$, then $D$ contains a copy of $T_{r}$.
\end{lemma}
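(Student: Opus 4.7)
The plan is to induct on $r \ge 3$. The base case $r = 3$ is a direct argument: in any $T_3$-free digraph $D$ satisfying the hypothesis, $N^+(v)$ must lie in a single part $V_{j(v)}$ (otherwise two vertices of $N^+(v)$ in distinct parts close a $T_3$ together with $v$), and the combination of $|V_j| < 3n/5$ and $|V_{j(v)}| > 2n/5$ restricts the ``target'' part to at most two choices; a short case split using the third part granted by $s \ge 3$ produces a contradiction by tracing that a vertex in this third part would be dominated by all of one of the ``target'' parts, forcing it into the wrong side of the partition.

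For the inductive step, assume the lemma for $r-1 \ge 3$ and suppose $D$ is $T_r$-free. Write $P(v) = \{j : N^+(v) \cap V_j \ne \emptyset\}$. From $|V_j| \le n - d^+(v) < \frac{3n}{3r-4}$ one obtains $d^+(v) < |P(v)| \cdot \frac{3n}{3r-4}$, which forces $|P(v)| \ge r-2$. For the matching upper bound $|P(v)| \le r-2$, I would apply the inductive hypothesis to $D[N^+(v)]$ whenever $|P(v)| \ge r-1$: the sub-digraph is complete $\ge (r-1)$-partite on $m = d^+(v)$ vertices with $\delta^+(D[N^+(v)]) \ge 2m - n$, and the inequality $2m - n > \frac{3r-10}{3r-7} m$ is algebraically equivalent to $m > \frac{3r-7}{3r-4}n$; hence IH yields $T_{r-1} \subseteq D[N^+(v)]$, and $v$ dominates this $T_{r-1}$ to produce $T_r \subseteq D$ by Property~(ii), a contradiction. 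So $|P(v)| = r-2$ for every $v$.

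The crux is to derive a contradiction from $|P(v)| = r-2$. The key observation is that any ``good'' $(r-2)$-tuple $J$ (one with $\sum_{j \in J} |V_j| > \frac{3r-7}{3r-4}n$) must lie inside the set $L = \{j : |V_j| > \frac{2n}{3r-4}\}$ of ``big'' parts, because for any $j_0 \in J$, $|V_{j_0}| > \frac{3r-7}{3r-4}n - (r-3) \cdot \frac{3n}{3r-4} = \frac{2n}{3r-4}$. Since each $P(v)$ is good, $P(v) \subseteq L$, and I would case-analyse on $|L|$: (a) $|L| \le r-3$ is immediately impossible since $P(v)$ has $r-2$ members in $L$; (b) $|L| = r-2$ forces $P(v) = L$ for every $v$, hence $V_j = \emptyset$ for every $j \in L$, contradicting non-emptiness of parts; (c) $r-1 \le |L| < s$: for $j_0 \in [s]\setminus L$ and any $v \in V_{j_0}$, $L \setminus P(v) \subseteq F(v) := [s] \setminus (P(v) \cup \{j_0\})$ is nonempty, and picking $a \in L \cap F(v)$ forces every $u \in V_a$ to dominate $v$, whence $j_0 = \text{part}(v) \in P(u) \subseteq L$, contradicting $j_0 \notin L$; (d) $L = [s]$: the complement of any good tuple has $s - r + 2 \ge 2$ parts in $L$ summing to $> \frac{4n}{3r-4} > \frac{3n}{3r-4}$, violating the complement-sum bound that characterises good tuples. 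All four sub-cases yield contradictions, completing the induction.

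The main obstacle is the structural case analysis on $|L|$; the conceptual insight is that the tight minimum out-degree pins the ``visible'' parts of each vertex to exactly $r-2$ big parts, and this rigidity clashes with $s \ge r$ in every configuration of the big/small partition. The reductions in the first two paragraphs are fairly mechanical, but the identification of $L$ and the specific propagation argument in case~(c) appear to be the nontrivial structural content.
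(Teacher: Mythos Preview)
Your proposal is correct in outline and shares the same inductive scaffold as the paper (induction on $r$, apply the hypothesis to $D[N^+(v)]$ to pin down $|P(v)| = r-2$, then exploit the big/small part dichotomy), but there is one slip and the endgame is organized differently.

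The slip: you write $\delta^+(D[N^+(v)]) \ge 2m - n$, which is not what follows from the standard restriction bound. What you actually have is $\delta^+(D[N^+(v)]) \ge \delta^+(D) - (n-m) > \tfrac{3r-7}{3r-4}n - (n-m) = m - \tfrac{3n}{3r-4}$; since $m \ge \delta^+(D)$, your $2m-n$ is \emph{larger} than this and hence unjustified. Fortunately the correct bound still yields $\delta^+(D[N^+(v)]) > \tfrac{3r-10}{3r-7}\,m$ by the same algebra (the inequality $m - \tfrac{3n}{3r-4} > \tfrac{3r-10}{3r-7}m$ is again equivalent to $m > \tfrac{3r-7}{3r-4}n$), so the step survives with this replacement.

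On the comparison: the paper's base case is a two-line pigeonhole on a triangle (two of its three vertices share an out-neighbour, and that closes a $T_3$), shorter than your structural argument. For the inductive step the paper also establishes $\chi(D_v)=r-2$ and the lower bound $|B_i|>\tfrac{2n}{3r-4}$ on the classes of $D_v$, deducing that any vertex with positive in-degree lies in a big part. But instead of your case split on $|L|$, the paper observes that at least two full colour classes of $D$ lie in $S = V(D)\setminus N^+(v)$; one of them, say $A_i$, has size $<\tfrac{2n}{3r-4}$, so every vertex of $A_i$ is a source, hence dominates all of $V(D)\setminus A_i$; then $\chi(D) \le \chi(D-A_i)+1 = \chi(D_v)+1 \le r-1$, contradicting $s\ge r$. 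Your case (c) captures the same phenomenon from the dual side (a vertex in a small part is dominated by a whole big part, forcing its index into $L$), and your cases (b) and (d) are clean boundary checks the paper does not need because the source argument handles them implicitly. The paper's route is slightly slicker; yours is more exhaustive but entirely valid.
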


\begin{proof}
    We proceed by induction on $r$. For the base case $r=3$, the condition $s \ge 3$ implies that $G(D)$ contains a triangle. 
    Let its vertices be $v_1, v_2, v_3$.
    As $\delta^+(D) > \frac{2}{5}n$, the pigeonhole principle implies that some pair $\{v_i,v_j\}$ has a common out-neighbor $w$.
    Then subgraph induced by $\{v_i, v_j, w\}$ contains a $T_3$, since $v_i \to v_j$, $v_i \to w$, and $v_j \to w$.

    Now, assume the lemma holds for all $r'<r$. 
    Let the color classes of $D$ be $A_{1}, \dots, A_{s}$. 
    Suppose to the contrary that $D$ is $T_{r}$-free.
    The minimum out-degree condition implies that $\alpha(D) = \max_{i\in[s]} |A_i| < n - \frac{3r-7}{3r-4}n = \frac{3}{3r-4}n$. 
    We first establish a key property of out-neighborhoods in $D$.

\begin{claim}\label{cl:Dv}
    For every $v\in V(D)$, the subgraph $D_{v}:=D[N^{+}(v)]$ is $(r-2)$-colorable.
\end{claim}

\begin{pf}
   Clearly, $D_{v}$ is $T_{r-1}$-free; otherwise, since $v$ dominates all vertices in $N^{+}(v)$, the set $V(T_{r-1}) \cup \{v\}$ induces a copy of $T_r$ by \cref{property} (ii).
    Furthermore, we have $|D_v|=d^+(v)>\frac{3r-7}{3r-4}n$ and
\begin{align*}
    \delta^+(D_v)>\frac{3r-7}{3r-4}n-(n-|D_v|)>\frac{3(r-1)-7}{3(r-1)-4}|D_v|.
\end{align*}
    Since $D_v$ is $T_{r-1}$-free and complete multipartite, by the induction hypothesis, we have $\chi(D_v)\le r-2$. 
\end{pf}

    Moreover, we must have $\chi(D_v) = r-2$. Otherwise, $\alpha(D_v) \ge \frac{|D_v|}{r-3} > \frac{(3r-7)/(r-3)}{3r-4}n> \frac{3n}{3r-4}$, contradicting $\alpha(D_v) \le \alpha(D) < \frac{3n}{3r-4}$.
    Let $B_1,\dots,B_{r-2}$ be the color classes of $D_v$. 
    Note that $B_i$ is a subset of some $A_j$.
    Then $|B_i|< \frac{3}{3r-4}n$, and for each $i\in [r-2]$, we have 
\begin{align*}
    |B_i|=d^+(v)-\sum_{j\neq i}|B_j|> \frac{3r-7}{3r-4}n-(r-3)\frac{3n}{3r-4}=\frac{2n}{3r-4}.
\end{align*}
    This implies a crucial fact: if a vertex $u$ has a positive in-degree (i.e., is dominated by some vertex $v$), then $u$ belongs to a color class $B_i$ in the coloring of $D_v$. Since $B_i$ is a subset of some partition class $A_j$ of $D$ and $|B_i| > \frac{2n}{3r-4}$, it follows that $|A_j| > \frac{2n}{3r-4}$.

    Now, let $S = V(D) \setminus V(D_v)$. Then $|S|=n-d^+(v) < \frac{3n}{3r-4}$. 
    Since $D$ is $s$-partite with $s \geq r$ and $N^+(v)$ can be covered by $r-2$ of these color classes, at least two of the color classes of $D$ must be entirely contained in $S$.
    By the pigeonhole principle, at least one such set, say $A_i$, has size $|A_i|\le \frac{|S|}{2}<\frac{1.5n}{3r-4}<\frac{2n}{3r-4}$, which implies that every vertex in $A_i$ is a source (i.e., with zero in-degree).
    As $D$ is complete $s$-partite, any source vertex must dominate all vertices outside its own color class.
    Thus, for every $v\in A_i$, we have $N^+(v)=V(D)\setminus A_i$.
    Moreover, $\chi(D-A_i)=\chi(D_v)\le r-2$ by \cref{cl:Dv}, which implies that $s=\chi(D) \leq \chi(D - A_i) + 1 \le r-1$, contradicting $s \geq r$. 
    Hence $D$ contains $T_r$.    
\end{proof}

\begin{lemma}[Principle of Inclusion-Exclusion]\label{lmm:Inclusion-Exclusion}
    Let $A_1, \ldots, A_p$ be finite sets. Then
$$
|A_1 \cap \cdots \cap A_p| \geq \sum_{i=1}^p|A_i|-(p-1)\left|\bigcup_{i=1}^p A_i\right| .
$$
\end{lemma}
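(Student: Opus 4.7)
The plan is to pass to complements inside the smallest reasonable ambient set and reduce the desired lower bound to an ordinary union bound. First I would set $U := \bigcup_{i=1}^{p} A_i$ and, for each $i \in [p]$, define $B_i := U \setminus A_i$; these complements satisfy $|B_i| = |U| - |A_i|$, which is exactly the algebraic identity that will let me inject $\sum_{i} |A_i|$ into the argument at the end.

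Next I would invoke De~Morgan's law inside the universe $U$ to write $U \setminus \bigcap_{i=1}^{p} A_i = \bigcup_{i=1}^{p} B_i$, so that bounding $|\bigcap_{i} A_i|$ from below is the same as bounding $|\bigcup_{i} B_i|$ from above. At that point the elementary union bound $|\bigcup_{i} B_i| \leq \sum_{i} |B_i|$ does all of the work: after substituting $|B_i| = |U| - |A_i|$ and using $|U \setminus \bigcap_{i} A_i| = |U| - |\bigcap_{i} A_i|$ (which is legal because $\bigcap_{i} A_i \subseteq U$), rearranging produces $|A_1 \cap \cdots \cap A_p| \geq \sum_{i} |A_i| - (p-1)|U|$, with the coefficient $p-1$ arising precisely as $p\,|U| - |U|$.

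I do not expect a genuine obstacle; the only point worth a moment's pause is the degenerate case $U = \emptyset$, in which every $A_i$ is empty and both sides of the claimed inequality vanish, so the statement holds trivially. As a sanity alternative I could instead run a double counting argument based on the multiplicity $f(x) := \#\{i \in [p] : x \in A_i\}$: one has $\sum_{i} |A_i| = \sum_{x \in U} f(x)$, $|\bigcap_{i} A_i| = \#\{x \in U : f(x) = p\}$, and for every $x \in U$ the inequality $f(x) - \mathbf{1}[f(x) = p] \leq p-1$ holds pointwise, which when summed over $x \in U$ reproduces the same bound. Either route is essentially immediate, but I would present the complement reduction since it is the shorter and more transparent of the two.
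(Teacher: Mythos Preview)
Your proof is correct and both of your routes (the complement/union-bound reduction and the multiplicity double count) are standard and complete. There is nothing to compare against, however: the paper states this lemma without proof, treating it as the well-known inclusion--exclusion lower bound.
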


The next lemma provides a simpler, yet weaker, sufficient condition for finding a $T_{r}$.

\begin{lemma}\label{lmm:T_r}
 If an $n$-vertex digraph $D$ has $\delta^{+}(D)\ge\frac{r-2}{r-1}n+1$, then $D$ contains a copy of $T_{r}$.
\end{lemma}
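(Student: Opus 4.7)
The plan is to induct on $r$. For the base case $r=2$, the hypothesis $\delta^+(D)\ge 1$ gives any arc as a $T_2$. (For concreteness one could also handle $r=3$ directly: $\delta^+(D)\ge n/2+1$ means any $v$ and any $u\in N^+(v)$ satisfy $|N^+(v)|+|N^+(u)|>n$, so they share an out-neighbor $w$, producing $v\to u\to w$ with $v\to w$, i.e.\ a $T_3$.)

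For the inductive step, I would fix an arbitrary $v\in V(D)$ and work inside $A:=N^+(v)$, whose size satisfies $|A|\ge \tfrac{r-2}{r-1}n+1$. The strategy is to find a copy of $T_{r-1}$ inside $D[A]$; since $v$ dominates every vertex of $A$, \cref{property}(ii) then upgrades this $T_{r-1}$ to a $T_r$ by adjoining $v$ as a new source.

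To apply induction to $D[A]$, I need $\delta^+(D[A])\ge \tfrac{r-3}{r-2}|A|+1$. Each $u\in A$ can lose at most $n-|A|$ of its out-neighbors by restricting to $A$, so
\[
\delta^+(D[A])\ \ge\ \delta^+(D)-(n-|A|)\ \ge\ |A|-\tfrac{n}{r-1}+1.
\]
The required inequality $|A|-\tfrac{n}{r-1}+1\ge \tfrac{r-3}{r-2}|A|+1$ simplifies to $\tfrac{|A|}{r-2}\ge \tfrac{n}{r-1}$, which is exactly $|A|\ge \tfrac{r-2}{r-1}n$ — and this follows from the hypothesis on $|A|$. So the inductive hypothesis applies, $D[A]$ contains a $T_{r-1}$, and together with $v$ we obtain $T_r$.

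There is no real structural obstacle here; the proof is entirely driven by the local neighbourhood trick, and the only thing to be careful with is keeping the additive $+1$ terms aligned so the induction closes cleanly. The lemma is strictly weaker than \cref{thm:1.1} because $\tfrac{r-2}{r-1}>\tfrac{3r-7}{3r-4}$, but the one-line neighbourhood argument makes it a useful black box in later arguments where one only needs a quick guarantee of $T_r$.
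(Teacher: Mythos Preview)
Your argument is correct, and in fact it is cleaner than the paper's own proof. The paper proceeds by a two-step extension: it first uses an averaging argument to locate a vertex $u$ with large in-degree, passes to $S=N^{+}(u)\cap N^{-}(u)$, applies induction there to find a $T_{r-2}$ inside $S$, and then invokes inclusion--exclusion (\cref{lmm:Inclusion-Exclusion}) over the $r-1$ vertices $\{u\}\cup V(T_{r-2})$ to find a common out-neighbor $v$, so that $u$ (as a source) and $v$ (as a sink) sandwich the $T_{r-2}$ into a $T_r$. Your approach instead extends by a single vertex: you stay entirely in $A=N^{+}(v)$, verify that the minimum out-degree bound survives the restriction with the right constant for $r-1$, and invoke induction directly to produce a $T_{r-1}$ dominated by $v$. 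This avoids both the in-degree averaging step and the inclusion--exclusion lemma, and the arithmetic closes just as you wrote. Either route works, but yours is the more economical one for this particular lemma.
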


\begin{proof}
    We proceed by induction on $r$.
    The base case $r=2$ is trivial.
    For the inductive step, assume $r\ge 3$ and the lemma holds for all integers less than $r$.
    Since $\sum_{x\in V(D)}d^+(x)=\sum_{x\in V(D)}d^-(x)$,
  by the pigeonhole principle, there is a vertex $u$ with $d^-(u)\ge \frac{r-2}{r-1}n+1$.
  The minimum out-degree condition also implies $d^+(u)\ge \frac{r-2}{r-1}n+1$.
  Let $S=N^{+}(u)\cap N^{-}(u)$. Then $$|S|\ge d^-(u)+d^+(u)-n\ge \frac{r-3}{r-1}n+2.$$
  Consider the induced subgraph $D[S]$. We have
 $$\delta^+(D[S])\ge \delta^+(D)-(n-|S|)\ge  \frac{r-4}{r-3}|S|+1.$$ 
 By the induction hypothesis, $D[S]$ contains $T_{r-2}$, denoted by $Q$.
  By \cref{lmm:Inclusion-Exclusion}, we have 
\begin{align*}
    \left|\bigcap_{v\in \{u\}\cup V(Q)} N^+(v)\right|\ge (r-1)\left(\frac{r-2}{r-1}n+1\right)-(r-2)n>0.
\end{align*}
  Therefore, $u$ and $V(Q)$ have a common out-neighbor $v$.
  Note that $u$ dominates $\{v\}\cup V(Q)$, and $v$ is dominated by $u$ and $V(Q)$, $u$, $v$ and $Q$ form a copy of $T_r$ by \cref{property} (ii).
\end{proof}

Brandt \cite{brandt2003structure} gave a simple proof of the Andr\'asfai–Erd\H{o}s–S\'os theorem using 5-wheel-like graphs. We extend this concept to digraphs.

\begin{definition}[5-wheel-like digraph $\overrightarrow{W}_{r,t}$; see Figure \ref{fig:5-wheel-like}]
    A 5-wheel-like digraph $\overrightarrow{W}_{r,t}$ consists of vertices $v$, $w_1$, $w_2$, and transitive tournaments $Q_1$, $Q_2$ such that:
    \begin{enumerate}
        \item $Q_1$ and $Q_2$ are transitive tournaments on $r-2$ vertices with $|V(Q_1) \cap V(Q_2)| = t$;
        \item For each $i \in [2]$, both $V(Q_i) \cup \{v\}$ and $V(Q_i) \cup \{w_i\}$ induce transitive tournaments on $r-1$ vertices;
        \item $vw_1, vw_2 \notin E(G(\overrightarrow{W}_{r,t}))$ and $w_1w_2 \in E(G(\overrightarrow{W}_{r,t}))$.
    \end{enumerate}
\end{definition}

\begin{figure}[H]
\centering
\resizebox{0.3\linewidth}{!}{%
\begin{tikzpicture}[scale=1] 
  \coordinate (A) at (0,0); 
  \coordinate (B) at (2,0); 
  \coordinate (C) at (2.618,1.902); 
  \coordinate (D) at (1,3.078); 
  \coordinate (E) at (-0.618,1.902); 

  \coordinate (A1) at (-0.5,1.55);
  \coordinate (A2) at (0.8,1.45);
  \coordinate (B1) at (2.5,1.55);
  \coordinate (B2) at (1.2,1.45);
  \coordinate (D1) at (-0.5,2.05);
  \coordinate (D2) at (2.5,2.05);
  \coordinate (D3) at (1,2.1);

  \draw[thick, red] (A) -- (B);
  \draw[thick, red] (A) -- (A1);
  \draw[thick, red] (A) -- (A2);
  \draw[thick, red] (B) -- (B1);
  \draw[thick, red] (B) -- (B2);
  \draw[thick, red] (D) -- (D1);
  \draw[thick, red] (D) -- (D2);
  \draw[thick, red] (D) -- (D3);

  \foreach \i in {A,B,D}
   \draw[fill=white, line width=0.701pt] (\i) circle (3pt);

   \draw[thick, blue] (0.3,1.8) ellipse (1 and 0.4);
   \draw[thick, blue] (1.7,1.8) ellipse (1 and 0.4);

   \coordinate (Q1) at (-0.7,1.8);
   \coordinate (Q2) at (2.7,1.8);
   \coordinate (v) at (1,3.2);
   \coordinate (w1) at (0,-0.65);
   \coordinate (w2) at (2,-0.65);
   \node[left]  at (Q1) {$Q_1$};
   \node[right]  at (Q2) {$Q_2$};
   \node[above] at (v) {$v$};
   \node[above] at (w1) {$w_1$};
   \node[above] at (w2) {$w_2$};
\end{tikzpicture}
}
\caption{The 5-wheel-like digraph $\overrightarrow{W}_{r,t}$.}
\label{fig:5-wheel-like}
\end{figure}

We are now ready to prove \cref{thm:directed-AES}. We use a saturation argument, dividing the proof into two cases based on whether the underlying graph of the saturated digraph is complete multipartite.

\begin{theorem}\label{thm:directed-AES}
   Let $D$ be a $T_r$-free digraph on $n$ vertices. If $\delta^+(D)> \dfrac{3r-7}{3r-4}n$, then $D$ is  $(r-1)$-colorable.
\end{theorem}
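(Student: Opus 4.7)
I would prove \cref{thm:directed-AES} via a saturation argument in the spirit of Brandt's proof of the classical Andr\'asfai--Erd\H{o}s--S\'os theorem. Suppose for contradiction that $D$ is $T_r$-free with $\delta^+(D) > \frac{3r-7}{3r-4} n$ but $\chi(D) \ge r$. Iteratively add arcs while preserving $T_r$-freeness until no further arc can be added; let $D'$ denote the resulting $T_r$-saturated digraph. Then $\delta^+(D') \ge \delta^+(D) > \frac{3r-7}{3r-4} n$ and $\chi(D') \ge \chi(D) \ge r$. The plan is then to split into two cases based on whether the underlying graph $G(D')$ is complete multipartite.

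If $G(D')$ is complete $s$-partite, then $s = \chi(G(D')) = \chi(D') \ge r$, and \cref{lmm:multipartite} applied to $D'$ yields a copy of $T_r$, contradicting $T_r$-freeness. Otherwise $G(D')$ is not complete multipartite, so there exist vertices $v, w_1, w_2$ with $vw_1, vw_2 \notin E(G(D'))$ and $w_1 w_2 \in E(G(D'))$. For each $i \in \{1,2\}$, $T_r$-saturation guarantees that $D' \cup \{(v, w_i)\}$ contains a copy of $T_r$ using the new arc; deleting $w_i$ and then $v$ from this $T_r$ and invoking \cref{property} (i) produces an $(r-2)$-vertex set $V(Q_i)$ such that both $V(Q_i) \cup \{v\}$ and $V(Q_i) \cup \{w_i\}$ induce $T_{r-1}$'s in $D'$. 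Hence $D'$ contains a 5-wheel-like sub-digraph $\overrightarrow{W}_{r, t}$ on the vertex set $\{v, w_1, w_2\} \cup V(Q_1) \cup V(Q_2)$, with $t = |V(Q_1) \cap V(Q_2)|$.

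The main obstacle is to extract a contradiction from $\overrightarrow{W}_{r, t}$ together with the minimum out-degree bound. The key structural fact is that in a $T_r$-free digraph, for any $T_{r-1}$-vertex set $S$, \cref{property} (ii) forces $\bigcap_{u \in S} N^+(u) = \emptyset$ (and symmetrically for $N^-$). Applying \cref{lmm:Inclusion-Exclusion} to each of the four $T_{r-1}$'s $V(Q_i) \cup \{v\}$ and $V(Q_i) \cup \{w_i\}$ yields
\begin{align*}
\sum_{u \in S} d^+(u) \;\le\; (r-2)\,\Bigl|\bigcup_{u \in S} N^+(u)\Bigr| \;\le\; (r-2)\,n,
\end{align*}
whereas the minimum out-degree hypothesis gives $\sum_{u \in S} d^+(u) > (r-1) \cdot \frac{3r-7}{3r-4} n = (r-2)n - \frac{n}{3r-4}$. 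The resulting near-tightness forces $\bigcup_{u \in S} N^+(u)$ to cover all but $O(n/(3r-4))$ vertices of $D'$ for each such $S$. I would then choose the pair $(Q_1, Q_2)$ so as to extremize $t$, and combine these near-covering properties with the non-adjacency constraints imposed on $v, w_1, w_2$, aiming to isolate a sub-digraph on which \cref{lmm:T_r} applies and thus produce a forbidden copy of $T_r$, closing the contradiction.
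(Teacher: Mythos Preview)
Your setup is exactly right and matches the paper: pass to a $T_r$-saturated supergraph $\hat D$, split on whether $G(\hat D)$ is complete multipartite, invoke \cref{lmm:multipartite} in the first case, and in the second case use saturation to build a 5-wheel-like configuration $\overrightarrow{W}_{r,t}$ on $\{v,w_1,w_2\}\cup V(Q_1)\cup V(Q_2)$, then extremize~$t$. Up to this point you have essentially reproduced the paper's proof.

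The gap is in the final step. Your near-covering observation---that $\sum_{u\in S}d^+(u)\le (r-2)n$ for each $T_{r-1}$-set $S$, which is within $\frac{n}{3r-4}$ of the lower bound---is correct but does not by itself produce a contradiction; it is \emph{consistent} with the hypothesis, and there is no evident sub-digraph on which \cref{lmm:T_r} would apply to manufacture a forbidden $T_r$. The paper uses \cref{lmm:T_r} only in the degenerate case $t=r-2$ (where $V(Q_1)=V(Q_2)$), and there in contrapositive form: one defines $Y$ as the set of vertices dominated by all of $U=V(Q_1)\cup\{w_1,w_2\}$, observes $D[Y]$ is $T_{r-2}$-free, and extracts a vertex of low out-degree in $D[Y]$ to derive $\delta^+\le\frac{2r-5}{2r-3}n$.

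For the main case $t\le r-3$, the paper does \emph{not} look for a $T_r$ at all. Instead, set $W=\overrightarrow{W}_{r,t}$ (so $|W|=2r-t-1$) and $X=\bigcap_{u\in V(Q_1)\cap V(Q_2)}N^+(u)$; by \cref{lmm:Inclusion-Exclusion}, $|X|\ge t\,\delta^+(\hat D)-(t-1)n$. The key point---and the one idea your proposal is missing---is that the \emph{maximality of $t$} forces $d^+(W,x)\le |W|-3$ for every $x\in X$: if $v\to x$ and only one vertex of each $V(Q_i)\setminus V(Q_{3-i})$ fails to dominate $x$, then replacing those two vertices by $x$ in $Q_1,Q_2$ yields a copy of $\overrightarrow{W}_{r,t+1}$. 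Double-counting arcs out of $W$ then gives
\[
|W|\,\delta^+(\hat D)\le (|W|-3)|X|+(|W|-1)(n-|X|),
\]
which combined with the lower bound on $|X|$ yields $\delta^+(\hat D)\le \frac{2r+t-4}{2r+t-1}n\le\frac{3r-7}{3r-4}n$, the contradiction. So the role of maximizing $t$ is not to set up \cref{lmm:T_r}, but to guarantee that each $x\in X$ loses at least three potential in-arcs from $W$, which is what makes the double count bite.
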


\begin{proof}
  Consider the saturated digraph $\hat{D}$ of $D$, which is obtained by adding arcs until any further addition creates $T_r$.
  Then $\delta^+(\hat{D})\ge \delta^+(D)> \dfrac{3r-7}{3r-4}n$.
  If $G(\hat{D})$ is complete multipartite, by \cref{lmm:multipartite}, $\hat{D}$ is $(r-1)$-colorable, so is $D$.

    Now suppose $G(\hat{D})$ is not complete multipartite.
    This implies the existence of a 5-wheel-like digraph.

\begin{claim}
    $\hat{D}$ contains a 5-wheel-like digraph $\overrightarrow{W}_{r,t}$.
\end{claim}    

\begin{pf}
   Since $G(\hat{D})$ is not complete multipartite, there exist vertices $v,w_1,w_2$ such that $vw_1, vw_2 \notin E(G(\hat{D}))$ and $w_1w_2 \in E(G(\hat{D}))$. As $\hat{D}$ is saturated, for each $i \in [2]$, there exists a transitive tournament $Q_i$ of size $r-2$ such that $Q_i$ with $v$, $w_i$, and the arc $(v,w_i)$ form $T_r$. By \cref{property}(i), $Q_i \cup \{v\}$ and $Q_i \cup \{w_i\}$ form $T_{r-1}$, yielding $\overrightarrow{W}_{r,t}$, where $t=|V(Q_1)\cap V(Q_2)|$.
\end{pf}

 Choose a copy $W$ of $\overrightarrow{W}_{r,t}$ in $\hat{D}$ with maximum $t$.
 Let $Q_1$ and $Q_2$ be the copies of $T_{r-2}$ in $\overrightarrow{W}_{r,t}$.
 First, we provide an upper bound on $t$.

\begin{claim}
    $t\le r-3$.
\end{claim}

\begin{pf}
    Suppose to the contrary that $t=r-2$. 
    Then $V(Q_1)=V(Q_2)$.
    Let $U=V(Q_1)\cup \{w_1,w_2\}$.
    Since $w_i\cup V(Q_i)$ induces a copy of $T_{r-1}$, every vertex is dominated by at most $r-1$ vertices in $U$.
    Let $Y$ be the set of vertices dominated by exactly $r-1$ vertices in $U$, and $Z = V(\hat{D}) \setminus Y$. 
    Then every vertex in $Z$ is dominated by at most $r-2$ vertices in $U$.
    Obviously, $\hat{D}[Y]$ is $T_{r-2}$-free. By \cref{lmm:T_r}, there exists $y \in Y$ with $d^+_{\hat{D}[Y]}(y)\le \frac{r-4}{r-3}|Y|$. Then
    \[
    \delta^+(\hat{D}) \le d^+(y) \le \frac{r-4}{r-3}|Y| + |Z|,
    \]
    which implies
    \begin{align}\label{eq:111}
        (r-3)\delta^+(\hat{D}) \le (r-4)(n-|Z|)+(r-3)|Z|= (r-4)n + |Z|.
    \end{align}
    Also,
    \begin{align}\label{eq:222}
        r\delta^+(\hat{D}) \le \sum_{v \in U} d^+(v) \le (r-2)|Z| + (r-1)|Y| = (r-1)n - |Z|.
    \end{align}
    Combining \eqref{eq:111} and \eqref{eq:222} yields $\delta^+(\hat{D}) \le \frac{2r-5}{2r-3}n<\frac{3r-7}{3r-4}n$, a contradiction.
\end{pf}

 Note that $w_i\cup V(Q_i)$ induces a copy of $T_{r-1}$.
 We have $d^+(W,u)\le |W|-1$ for every $u\in V(\hat{D})$.
 Define $X= \bigcap_{v\in V(Q_1)\cap V(Q_2)}N^+(v)$.
 Then by \cref{lmm:Inclusion-Exclusion}, we have 
\begin{align}\label{eq:X-lower}
    |X|\ge t\delta^+(\hat{D})-(t-1)n.
\end{align}
 
 Moreover, for each $x\in X$, there exists $y_i\in (\{w_i\}\cup V(Q_i))\setminus V(Q_{3-i})$ that does not dominate $x$.
 Hence, $d^+(W,x)\le |W|-2$.
 We will further bound the number of vertices in $W$ that dominate $x$.

\begin{claim}\label{cl:2.10}
    For every $x\in X$, we have $d^+(W,x)\le |W|-3$.
\end{claim}

\begin{pf}
    If $v$ does not dominate $x$, then $d^+(W,x)\le |W|-3$. 
    We now assume $v\to x$.
    Then there exist $y_1\in V(Q_1)\setminus V(Q_2)$ and $y_2\in V(Q_2)\setminus V(Q_1)$ that do not dominate $x$; otherwise, $\hat{D}$ contains $T_r$ by \cref{property} (ii).
    Let $Q_i'=Q_1-y_i+x$ for $i\in [2]$.
    Then $Q_1'$ and $Q_2'$ are transitive tournaments on $r-2$ vertices and $|V(Q_1')\cap V(Q_2')|=t+1$.
    Note that $v$, $w_1$, $w_2$, $Q_1'$, $Q_2'$ form a copy of $\overrightarrow{W}_{r,t+1}$, contradicting the maximality of $t$.
    Thus $d^+(W,x)\le |W|-3$ for every $x\in X$. 
\end{pf}

By \cref{cl:2.10}, we have
\begin{align}\label{eq:X-upper}
    |W|\delta^+(\hat{D})\le \sum_{w\in W}d^+(w)\le (|W|-3)|X|+(|W|-1)(n-|X|).
\end{align}

    Note that $|W|=2r-t-1$.  
    Combining \eqref{eq:X-lower} and \eqref{eq:X-upper} yields $ \delta^+(\hat{D})\le \frac{2r+t-4}{2r+t-1}n\le \frac{3r-7}{3r-4}n$, a contradiction.
    Hence, $D$ is  $(r-1)$-colorable.
\end{proof}

\begin{cons}
    Let $G_n^3=C_5[\frac{n}{5}]$ and $G_n^r=C_5[\frac{n}{3r-4}]\vee K_{r-3}[\frac{3n}{3r-4}]$.
\end{cons}

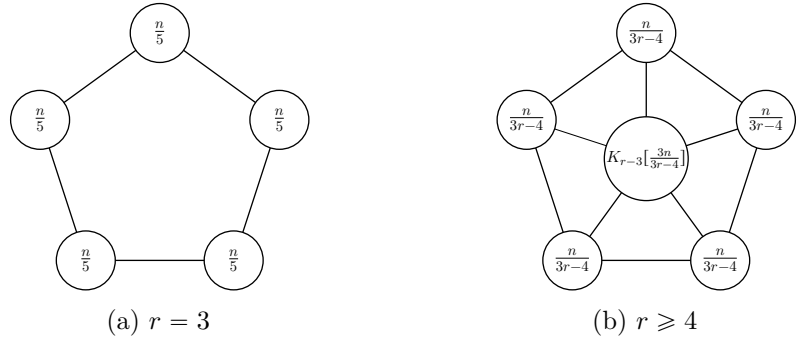
\begin{figure}[htbp]
    \centering

    \begin{subfigure}[b]{0.3\textwidth}
        \centering
    \resizebox{0.8\linewidth}{!}{
 \begin{tikzpicture}
  \def\R{3}
  \def\r{1}

  \foreach \i in {0,1,2,3,4}{
    \coordinate (P\i) at (90+\i*72:\R); 
  }

  \draw[thick, line width=0.901pt] (P0)--(P1)--(P2)--(P3)--(P4)--cycle;

  \foreach \i in {0,1,2,3,4}{
    \draw[fill=white, line width=0.901pt] (P\i) circle (20pt);
  }
  \node at (P0) {\Large $\frac{n}{5}$};
  \node at (P1) {\Large $\frac{n}{5}$};
  \node at (P2) {\Large $\frac{n}{5}$};
  \node at (P3) {\Large $\frac{n}{5}$};
  \node at (P4) {\Large $\frac{n}{5}$};
\end{tikzpicture}
}
    \caption{$r=3$}
    \end{subfigure}
    \hspace{30pt}
    \begin{subfigure}[b]{0.3\textwidth}
        \centering
    \resizebox{0.8\linewidth}{!}{
\begin{tikzpicture}
  \def\R{3}
  \def\r{1}

  \foreach \i in {0,1,2,3,4}{
    \coordinate (P\i) at (90+\i*72:\R); 
  }

  \draw[thick, line width=0.901pt] (P0)--(P1)--(P2)--(P3)--(P4)--cycle;

  \coordinate (c) at (0,0);
  \draw[thick, line width=0.901pt] (P0)--(c);
  \draw[thick, line width=0.901pt, line width=0.701pt] (P1)--(c);
  \draw[thick, line width=0.901pt] (P2)--(c);
  \draw[thick, line width=0.901pt] (P3)--(c);
  \draw[thick, line width=0.901pt] (P4)--(c);

  \foreach \i in {0,1,2,3,4}{
    \draw[fill=white, line width=0.901pt] (P\i) circle (20pt);
  }

  \draw[fill=white, line width=0.901pt] (0,0) circle (\r);

  \node at (c) {$K_{r-3}[\frac{3n}{3r-4}]$};
  \node at (P0) {\Large $\frac{n}{3r-4}$};
  \node at (P1) {\Large $\frac{n}{3r-4}$};
  \node at (P2) {\Large $\frac{n}{3r-4}$};
  \node at (P3) {\Large $\frac{n}{3r-4}$};
  \node at (P4) {\Large $\frac{n}{3r-4}$};
\end{tikzpicture}
}
        \caption{$r\ge 4$}
    \end{subfigure}
\caption{Lower bound for Andr\'asfai-Erd\H{o}s-S\'os theorem.}
\label{fig:AES}
\end{figure}

\begin{proof}[\textbf{Proof of Theorem \ref{thm:1.1}}]
  For the lower bound, consider the digraph $\overleftrightarrow{G_n^r}$ formed by the double orientation of $G_n^r$.
  Then $\overleftrightarrow{G_n^r}$ is $T_r$-free with $\chi(\overleftrightarrow{G_n^r})=r$ and $\delta^+(\overleftrightarrow{G_n^r})=\frac{3r-7}{3r-4}n$.
  This construction shows that $\delta^+_\chi(T_r,r-1)\ge\frac{3r-7}{3r-4}$.
  The upper bound, $\delta^+_\chi(T_r,r-1)\le \frac{3r-7}{3r-4}$, is precisely the statement of \cref{thm:directed-AES}.
    Combining these two bounds yields the desired equality.
\end{proof}

\section{Chromatic Profile of Directed Odd Cycles}\label{sec3}

We begin by constructing families of digraphs that provide lower bounds for the chromatic profiles.

\begin{cons}[See \cref{fig:cons} (a)]
 Let $K_{\lfloor \frac{n-1}{2} \rfloor, \lceil \frac{n-1}{2} \rceil}$ be the balanced complete bipartite graph on $n-1$ vertices and $\overleftrightarrow{K}$ be its double orientation.
 We define the digraph $A_n$ by taking the vertex set $V(\overleftrightarrow{K}) \cup \{v\}$ for some new vertex $v$, and adding all arcs from $v$ to $V(\overleftrightarrow{K})$.
\end{cons}

\begin{prop}\label{prop:3.1}
    The digraph $A_n$ is $\overrightarrow{C}_{2\ell+1}$-free for any $\ell\ge 1$ and satisfies $\chi(A_n)=3$ and $\delta^+(A_n)=\lfloor \tfrac{n}{2}\rfloor$.
\end{prop}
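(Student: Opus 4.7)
The plan is to verify the three assertions (cycle-freeness, chromatic number, out-degree) separately, using the fact that the only structural feature distinguishing $A_n$ from a double-oriented bipartite graph is the extra source vertex $v$.

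For the $\overrightarrow{C}_{2\ell+1}$-freeness, the crucial observation is that $v$ is a \emph{source}: by construction, $v$ has arcs only going out to $V(\overleftrightarrow{K})$ and none coming in. Consequently no directed cycle can pass through $v$, so every directed cycle of $A_n$ is contained in the sub-digraph $\overleftrightarrow{K}$. A directed cycle $u_1 \to u_2 \to \cdots \to u_{2\ell+1} \to u_1$ inside $\overleftrightarrow{K}$ projects, edge by edge, onto a closed walk of length $2\ell+1$ in the underlying bipartite graph $K_{\lfloor (n-1)/2\rfloor,\lceil (n-1)/2\rceil}$; such a walk forces an odd cycle in a bipartite graph, contradiction. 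Hence no odd directed cycle exists in $A_n$.

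For $\chi(A_n)=3$, I would give matching upper and lower bounds. The upper bound is constructive: color the two sides of the bipartition of $\overleftrightarrow{K}$ with colors $1$ and $2$, and assign color $3$ to $v$; this is proper because $\overleftrightarrow{K}$ is bipartite and $v$ lies in its own class. The lower bound holds because any single edge $\{x,y\}$ of the bipartite graph together with $v$ spans a triangle in $G(A_n)$ (as $v$ is joined to every other vertex), forcing $\chi(A_n)\ge 3$.

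For the out-degree, the computation is direct. The vertex $v$ has $d^+(v)=n-1$. A vertex lying in the bipartite side of size $a$ has out-degree equal to the size of the opposite side, namely $n-1-a$, since in $\overleftrightarrow{K}$ each bipartite edge contributes both orientations and $v$ is not in $N^+$. Minimizing over $a\in\{\lfloor (n-1)/2\rfloor,\lceil (n-1)/2\rceil\}$ gives the claimed value.

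None of the three steps is a real obstacle; the proof is essentially an unpacking of the definitions. The only non-trivial ingredient is the first step, and its only content is the simple but decisive observation that $v$'s status as a source excludes it from every directed cycle, reducing the question of odd directed cycles in $A_n$ to the absence of odd cycles in a bipartite graph.
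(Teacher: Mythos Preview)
Your proof is correct, and in fact the paper states this proposition without proof, treating it as immediate from the construction. Your argument matches what any reader would supply: $v$ is a source and hence lies on no directed cycle, so an odd directed cycle would have to sit inside the double-oriented bipartite graph $\overleftrightarrow{K}$, whose underlying graph has no odd cycles; the triangle on $\{v,x,y\}$ for any bipartite edge $xy$ witnesses $\chi\ge 3$; and the out-degrees are read off directly.

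One small remark on the final computation: the minimum out-degree you correctly identify is $n-1-\lceil (n-1)/2\rceil=\lfloor (n-1)/2\rfloor$. For even $n$ this equals $n/2-1$, not $\lfloor n/2\rfloor$, so the paper's stated value is off by one in that parity. This is an imprecision in the paper's own statement (consistent with its remark that floors and ceilings are suppressed when inessential) and does not affect the intended application, since $\lfloor (n-1)/2\rfloor/n\to 1/2$ still yields the lower bound $\delta_\chi^+(\overrightarrow{C}_{2\ell+1},2)\ge 1/2$.
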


\begin{lemma}\label{thm:directed-odd-cycle-upper-bdd}
   Let $\ell\ge 3$ be an integer, and let $D$ be a digraph on $n$ vertices. If $\delta^{+}(D) \geq (n+\ell-2)/2$, then $D$ contains $\overrightarrow{C_\ell}$.
\end{lemma}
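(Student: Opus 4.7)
The plan is to proceed by induction on $\ell$, with base case $\ell = 3$. For the base case, averaging the out-degree sum yields a vertex $u$ with $d^{-}(u) \ge (n+1)/2$; for any $v \in N^{+}(u)$, the bound $|N^{+}(v)| + |N^{-}(u)| \ge n+1$ forces $N^{+}(v) \cap N^{-}(u) \neq \emptyset$, producing a directed triangle $u \to v \to w \to u$.

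For the inductive step $\ell \ge 4$, the hypothesis $\delta^{+}(D) \ge (n+\ell-2)/2 \ge (n+(\ell-1)-2)/2$ permits applying the induction hypothesis to obtain a $\overrightarrow{C}_{\ell-1}$ in $D$, say $C: c_1 \to c_2 \to \cdots \to c_{\ell-1} \to c_1$. The strategy is to upgrade $C$ to $\overrightarrow{C}_{\ell}$ by an \emph{insertion}: find $w \in U := V(D) \setminus V(C)$ and an index $i$ such that $c_i \to w$ and $w \to c_{i+1}$ (indices mod $\ell-1$). Then replacing the arc $c_i \to c_{i+1}$ by the two arcs $c_i \to w \to c_{i+1}$ yields the desired directed $\ell$-cycle.

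Assume, toward a contradiction, that no such insertion is possible. Then for every $w \in U$ the sets $\{i : c_i \to w\}$ and $\{j-1 : w \to c_j\}$ are disjoint subsets of $\mathbb{Z}/(\ell-1)\mathbb{Z}$, giving $d^{-}(w, V(C)) + d^{+}(w, V(C)) \le \ell - 1$. Summing over $w \in U$:
\[ e(V(C), U) + e(U, V(C)) \le (\ell - 1)(n - \ell + 1). \]
Combining with the lower bound $e(V(C), U) \ge (\ell-1)(n-\ell+2)/2$, which follows from $\delta^{+}(D) \ge (n+\ell-2)/2$ and $d^{+}(c_i, V(C)) \le \ell - 2$, yields $e(U, V(C)) \le (\ell-1)(n-\ell)/2$. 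A matching lower bound on $e(U, V(C))$, obtained by summing the trivial estimate $d^{+}(w, V(C)) \ge d^{+}(w) - (n-\ell) \ge (3\ell-n-2)/2$ over $w \in U$, should contradict the upper bound and complete the induction.

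\emph{The main obstacle} is that the trivial lower bound $(n-\ell+1)(3\ell-n-2)/2$ for $e(U, V(C))$ is only positive when $3\ell > n+2$, and a routine algebraic comparison shows that it actually beats the upper bound only when $\ell$ is roughly at least $n/2$. For smaller $\ell$, additional ideas are needed---for instance, choosing $C$ so as to extremize the arc pattern between $V(C)$ and $U$, or bypassing the insertion argument and working instead with a longest directed path $P = v_0 \to \cdots \to v_k$, which satisfies $k \ge \delta^{+}(v_k) \ge (n+\ell-2)/2$ since $N^{+}(v_k) \subseteq V(P) \setminus \{v_k\}$, and then arguing that the large backward arc set $\{i : v_k \to v_i\}$ must contain the index $k - \ell + 1$, yielding $\overrightarrow{C}_{\ell}$ directly as $v_{k-\ell+1} \to v_{k-\ell+2} \to \cdots \to v_k \to v_{k-\ell+1}$.
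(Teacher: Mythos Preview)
Your base case $\ell=3$ is fine, and you correctly diagnose that the insertion approach only closes when $\ell$ is roughly $n/2$ or larger. The proof therefore hinges on the longest-path idea, and that is where the real gap lies: from $N^{+}(v_k)\subseteq\{v_0,\ldots,v_{k-1}\}$ and $|N^{+}(v_k)|\ge (n+\ell-2)/2$ you cannot deduce that the \emph{particular} index $k-\ell+1$ belongs to $\{i:v_k\to v_i\}$. The standard longest-path trick (take the minimum such $i$) only yields a directed cycle of length at least $|N^{+}(v_k)|+1\ge (n+\ell)/2$, not one of prescribed length~$\ell$; and no pigeonhole on a set of size $(n+\ell-2)/2$ inside $\{0,\ldots,k-1\}$ forces membership of a single fixed element. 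So neither branch of your plan actually finishes.

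The paper's proof is direct and avoids induction altogether. By averaging, choose $u$ with $d^{-}(u)\ge (n+\ell-2)/2$; since also $d^{+}(u)\ge (n+\ell-2)/2$, the intersection $N^{+}(u)\cap N^{-}(u)$ is nonempty. Now work entirely inside $N^{-}(u)$: every vertex there has at least $(n+\ell-2)/2-(n-|N^{-}(u)|)\ge \ell-2$ out-neighbours in $N^{-}(u)$, so starting from some $v_1\in N^{+}(u)\cap N^{-}(u)$ one can greedily build a directed path $v_1\to v_2\to\cdots\to v_{\ell-1}$ within $N^{-}(u)$. Then $u\to v_1\to\cdots\to v_{\ell-1}\to u$ is the required $\overrightarrow{C_\ell}$. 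The key point you are missing is to localise the search to a set ($N^{-}(u)$) in which the minimum out-degree is already $\ell-2$, which guarantees a path of the exact length needed rather than merely a long cycle.
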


\begin{proof}
  Suppose that $D$ is a digraph with $\delta^+(D)\geq (n+\ell-2)/2$.
  Since $\sum_{x\in V(D)}d^+(x)=\sum_{x\in V(D)}d^-(x)$,
  by the pigeonhole principle, there is a vertex $u$ with $d^-(u)\ge (n+\ell-2)/2$.
  Hence, $|N^+(u)\cap N^-(u)|\ge |N^+(u)|+|N^-(u)|-n>0$, which implies $N^+(u)\cap N^-(u)\neq \varnothing$.
  Let $v_1\in N^+(u)\cap N^-(u)$. Consider the subgraph induced by $N^-(u)$.
  Note that each vertex in $N^-(u)$ has at least $(n+\ell-2)/2-(n-(n+\ell-2)/2)=\ell-2$ out-neighbors within $N^-(u)$.
  By iteratively selecting out-neighbors within $N^-(u)$, we construct a directed path $v_1,v_2,\ldots,v_{\ell-1}$ in $N^-(u)$.
  Since arcs $(u,v_1)$ and $(v_{\ell-1},u)$ exist, we find a directed cycle of length $\ell$.
\end{proof}

\begin{proof}[\textbf{Proof of Theorem \ref{thm:directed-cycles}}]
    \cref{prop:3.1} shows that $\delta^+_\chi(\overrightarrow{C}_{2\ell+1},2)\ge 1/2$ and \cref{thm:directed-odd-cycle-upper-bdd} implies that  $\delta^+_\chi(\overrightarrow{C}_{2\ell+1},2)\le 1/2$.
    Thus $\delta^+_\chi(\overrightarrow{C}_{2\ell+1},2)=1/2$.
\end{proof}

\section{Chromatic Profile of Other Pentagon Orientations}\label{sec4}

In contrast to the directed cycle, the three remaining pentagon orientations require a more intricate analysis. We begin by providing the constructions that establish the lower bounds for their profiles.

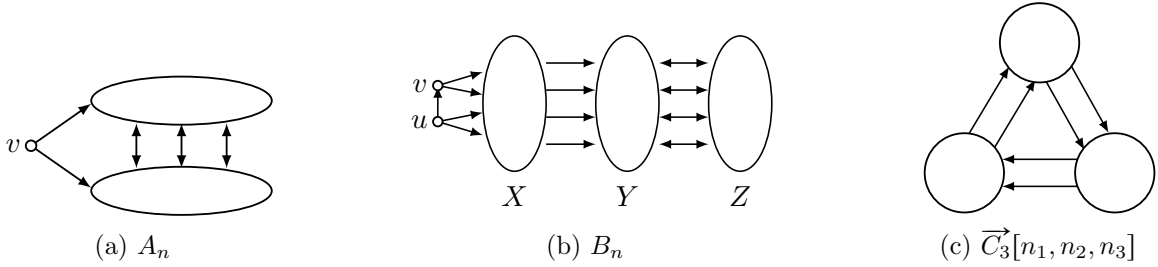
\begin{figure}[H]
    \centering

    \begin{subfigure}[b]{0.3\textwidth}
        \centering
        \begin{tikzpicture}[scale=0.4]
    
  \draw[line width=0.701pt] (5, 1.5) ellipse (3cm and 0.8cm);
  \draw[line width=0.701pt] (5, -1.5) ellipse (3cm and 0.8cm);
  \draw[latex-latex,line width=0.701pt] (5, 0.75) -- (5, -0.75);
  \draw[latex-latex,line width=0.701pt] (3.5, 0.75) -- (3.5, -0.75);
  \draw[latex-latex,line width=0.701pt] (6.5, 0.75) -- (6.5, -0.75);
  \draw[-latex,line width=0.701pt] (0, 0) -- (2, 1.4);
  \draw[-latex,line width=0.701pt] (0, 0) -- (2, -1.4);
  \draw[fill=white, line width=0.701pt] (0, 0) circle (5pt);  
  \coordinate (v) at (0, 0);
  \node[left] at (v) {$v$};
\end{tikzpicture}
        \caption{$A_n$}
    \end{subfigure}
    \hfill
    \begin{subfigure}[b]{0.3\textwidth}
        \centering
        \begin{tikzpicture}[scale=0.6]  
  \draw[line width=0.601pt] (3, 3) ellipse (0.7cm and 1.5cm);

  \draw[line width=0.601pt] (5.5, 3) ellipse (0.7cm and 1.5cm);
  \draw[-latex,line width=0.601pt] (3.7, 3.9) -- (4.8, 3.9);
  \draw[-latex,line width=0.601pt] (3.7, 3.3) -- (4.8, 3.3);
  \draw[-latex,line width=0.601pt] (3.7, 2.7) -- (4.8, 2.7);
  \draw[-latex,line width=0.601pt] (3.7, 2.1) -- (4.8, 2.1);

  \draw[line width=0.601pt] (8, 3) ellipse (0.7cm and 1.5cm);
  \draw[latex-latex,line width=0.601pt] (6.2, 3.9) -- (7.3, 3.9);
  \draw[latex-latex,line width=0.601pt] (6.2, 3.3) -- (7.3, 3.3);
  \draw[latex-latex,line width=0.601pt] (6.2, 2.7) -- (7.3, 2.7);
  \draw[latex-latex,line width=0.601pt] (6.2, 2.1) -- (7.3, 2.1);

  \draw [-latex,line width=0.601pt] (1.3, 2.6) -- (1.3, 3.4);
  \draw [-latex,line width=0.601pt] (1.3, 3.4) -- (2.3, 3.7);
  \draw [-latex,line width=0.601pt] (1.3, 3.4) -- (2.3, 3.2);
  \draw [-latex,line width=0.601pt] (1.3, 2.6) -- (2.3, 2.3);
  \draw [-latex,line width=0.601pt] (1.3, 2.6) -- (2.3, 2.8);

  \draw[fill=white, line width=0.701pt] (1.3, 2.6) circle (3pt); 
  \draw[fill=white, line width=0.701pt] (1.3, 3.4) circle (3pt); 

  \coordinate (X) at (3,0.5);
  \coordinate (Y) at (5.5,0.5);
  \coordinate (Z) at (8,0.5);
  \coordinate (u) at (1.3, 2.6);
  \coordinate (v) at (1.3, 3.4);
  \node[above] at (X) {$X$};
  \node[above] at (Y) {$Y$};
  \node[above] at (Z) {$Z$};
  \node[left] at (u) {$u$};
  \node[left] at (v) {$v$};
\end{tikzpicture}
        \caption{$B_n$}
    \end{subfigure}
    \hfill
    \begin{subfigure}[b]{0.3\textwidth}
        \centering
        \begin{tikzpicture}[scale=1] 
  \coordinate (A) at (0,0); 
  \coordinate (B) at (2,0); 
  \coordinate (C) at (1,1.73); 
  \coordinate (A1) at (-0.25,0); 
  \coordinate (A2) at (0.2,0); 
  \coordinate (A3) at (0.5,0.18); 
  \coordinate (A4) at (0.5,-0.18); 
  \coordinate (B1) at (2,0.18); 
  \coordinate (B2) at (1.6,0.3);
  \coordinate (B3) at (1.92,0.53);
  \coordinate (B4) at (2,-0.18);
  \coordinate (C1) at (0.6,1.42); 
  \coordinate (C2) at (0.95,1.25);
  \coordinate (C3) at (1.35,1.55);
  \coordinate (C4) at (0.9,1.55);

  \draw[-latex,line width=0.601pt] (A1) -- (C1);  
  \draw[-latex,line width=0.601pt] (A2) -- (C2); 
  \draw[-latex,line width=0.601pt] (C4) -- (B2); 
  \draw[-latex,line width=0.601pt] (C3) -- (B3); 
  \draw[-latex,line width=0.601pt] (B1) -- (A3); 
  \draw[-latex,line width=0.601pt] (B4) -- (A4);

  \foreach \i in {A,B,C}
   \draw[fill=white, line width=0.701pt] (\i) circle (15pt);

\end{tikzpicture}
        \caption{$\overrightarrow{C_3}[n_1, n_2, n_3]$}
    \end{subfigure}

    \caption{Illustrations of the constructions $A_n$, $B_n$, and $ \overrightarrow{C_3}[n_1, n_2, n_3]$.}
    \label{fig:cons}
\end{figure}

\begin{cons}[For $C'_5$, see \cref{fig:cons} (b)]\label{cons3}
Define $B_n$ as the digraph with:
    \begin{itemize}
        \item $V(B_n)=\{u,v\}\cup X\cup Y\cup Z$ with $|X|=\lceil\frac{n-2}{3}\rceil$, $|Y|=\lfloor\frac{n-2}{3}\rfloor$, and $|Z|=n-2-|X|-|Y|$;
        \item $A(B_n)=\{(u,v)\}\cup \{(w,x):w\in \{u,v\},x\in X\}\cup \{(x,y):x\in X, y\in Y\}\cup \{(y,z):y\in Y,z\in Z\}\cup \{(z,y):z\in Z, y\in Y\}$.
    \end{itemize}
\end{cons}

\begin{prop}\label{prop:3.2}
The digraph $B_n$ is $C'_5$-free and satisfies $\chi(B_n)=3$ and $\delta^+(B_n)=\lfloor \tfrac{n-2}{3}\rfloor$.
\end{prop}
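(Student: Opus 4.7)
The plan is to verify the three assertions of the proposition separately: the value of $\delta^+(B_n)$, the value of $\chi(B_n)$, and the fact that $B_n$ is $C_5'$-free. The first two are immediate from the explicit construction; the forbidden-subgraph statement is the substantive part, and I would handle it by a short case analysis driven by the layered structure of $B_n$.

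For the out-degrees, I would read them off directly from the arc list: $d^+(u) = |X|+1$, $d^+(v) = |X|$, $d^+(x) = |Y|$ for $x \in X$, $d^+(y) = |Z|$ for $y \in Y$, and $d^+(z) = |Y|$ for $z \in Z$. Since $|Y| = \lfloor (n-2)/3 \rfloor$ is at most both $|X|$ and $|Z|$ by the choice of sizes, the minimum equals $|Y|$, attained on $X \cup Z$. For $\chi(B_n)$, the triangle induced on $\{u,v,x\}$ for any $x \in X$ forces $\chi(B_n) \ge 3$; conversely, assigning color $1$ to $\{u\} \cup Y$, color $2$ to $\{v\} \cup Z$, and color $3$ to $X$ gives a proper $3$-coloring of the underlying graph, so $\chi(B_n) = 3$.

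The core argument is $C_5'$-freeness. Recall from \cref{fig:Orientations of C5} that $C_5'$ has vertices $a_1, \ldots, a_5$ with a directed path $a_1 \to a_2 \to a_3 \to a_4 \to a_5$ together with a chord $a_1 \to a_5$. Supposing such a copy exists in $B_n$, the chord forces $a_5 \in N^+(a_1)$, while the path forces $a_5$ to be the endpoint of a directed walk of length four starting at $a_1$. I would enumerate the five possibilities for the class of $a_1$ (namely $\{u\}$, $\{v\}$, $X$, $Y$, $Z$), then compute $N^+(a_1)$ and the set of endpoints of length-four walks from $a_1$ by iterating the arc rules $u \to v$, $\{u,v\} \to X$, $X \to Y$, $Y \leftrightarrow Z$, and verify in each case that these two sets lie in disjoint vertex classes. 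For instance, if $a_1 = u$ then $N^+(u) = \{v\} \cup X$, whereas every length-four walk from $u$ ends in $Y \cup Z$; and if $a_1 \in X$ then $N^+(a_1) = Y$ while every length-four walk from $a_1$ ends in $Z$. The cases $a_1 \in \{v\} \cup Y \cup Z$ are entirely analogous and produce the same disjointness.

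I do not anticipate a real obstacle: the construction is small and rigidly layered, so the case analysis is mechanical. The only mild subtlety is the pair of antiparallel arcs between $Y$ and $Z$, which means one must track the parity of the step count (odd-length walks from a vertex of $Y$ land in $Z$ and even-length walks return to $Y$, and vice versa) rather than collapsing these two sets into a single layer.
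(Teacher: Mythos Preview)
Your proposal is correct. The computations of $\delta^+(B_n)$ and $\chi(B_n)$ are fine (indeed more explicit than the paper, which simply asserts the out-degree and omits the chromatic number entirely), and your $C_5'$-freeness argument via tracking length-$4$ directed walks from each possible class of the source vertex $a_1$ goes through exactly as you describe; since you show that the set of length-$4$ walk endpoints is disjoint from $N^+(a_1)$ in every case, the stronger statement for walks automatically covers the paths you actually need.

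The paper takes a different and somewhat shorter route for $C_5'$-freeness: rather than working arc-by-arc, it observes that the underlying graph $G(B_n)$ becomes bipartite once the edge $uv$ is removed (with bipartition $\{u,v\}\cup Y$ versus $X\cup Z$), so every $5$-cycle in $G(B_n)$ must use the edge $uv$ and hence has the shape $u\text{--}v\text{--}x_1\text{--}y\text{--}x_2\text{--}u$ with $x_1,x_2\in X$ and $y\in Y$. One then reads off the forced orientation $u\to v\to x_1\to y\leftarrow x_2\leftarrow u$, which is $C_5''$ rather than $C_5'$. This structural shortcut collapses your five cases into a single undirected observation, at the cost of an extra step justifying why every $5$-cycle must contain $uv$. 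Your approach, by contrast, stays entirely on the directed side and never needs to classify undirected cycles; it is more mechanical but equally valid, and the parity bookkeeping you flag for the $Y\leftrightarrow Z$ block is exactly what makes the cases $a_1\in X\cup Y\cup Z$ work.
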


\begin{proof}
    Clearly, $\delta^+(B_n)=\lfloor \tfrac{n-2}{3}\rfloor$.
    To show that $B_n$ is $C'_5$-free, we observe that any 5-cycle in the underlying graph $G(B_n)$ must contain the edge $uv$. 
    Such a cycle must be of the form $u-v-x_1-y-x_2-u$ for some $x_1,x_2\in X$ and $y\in Y$. 
    However, inspecting the arc directions in $B_n$, no such cycle corresponds to an orientation isomorphic to $C'_5$.
\end{proof}

\begin{cons}[For $C''_5$ and $C'''_5$, see \cref{fig:cons} (c)]\label{cons4}
   Let $\overrightarrow{C_3}[n_1, n_2, n_3]$ be the blow-up of $\overrightarrow{C_3}$, where the vertex parts have sizes $n_1=\lceil n/3 \rceil$, $n_2=\lfloor n/3 \rfloor$, and $n_3=n-n_1-n_2$. 
\end{cons}

\begin{prop}\label{prop:3.3}
   The digraph $\overrightarrow{C_3}[n_1, n_2, n_3]$ is $\{C_{5}'', C_{5}'''\}$-free with chromatic three and minimum out-degree  $\lfloor n/3 \rfloor$.
\end{prop}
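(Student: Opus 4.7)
The statement bundles three independent claims about $D := \overrightarrow{C_3}[n_1,n_2,n_3]$: the minimum out-degree, the chromatic number, and the $\{C_5'',C_5'''\}$-freeness. Write $V_1,V_2,V_3$ for the three blow-up classes, so $|V_i|=n_i$ and every arc goes from $V_i$ to $V_{i+1}$ with indices taken modulo $3$.

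The first two claims I would dispatch directly. Every vertex in $V_i$ has out-neighborhood exactly $V_{i+1}$, so its out-degree equals $n_{(i\bmod 3)+1}$; comparing $\lceil n/3\rceil$, $\lfloor n/3\rfloor$, and $n-\lceil n/3\rceil-\lfloor n/3\rfloor$ case by case in the residue of $n$ mod $3$ shows the minimum equals $\lfloor n/3\rfloor$, hence $\delta^{+}(D)=\lfloor n/3\rfloor$. For the chromatic number, assigning color $i$ to $V_i$ is a proper $3$-coloring of the underlying graph $G(D)$, while picking one vertex from each $V_i$ produces a triangle in $G(D)$ that forbids any $2$-coloring, so $\chi(D)=3$.

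The main step is the forbidden-subdigraph claim, and my plan is to reformulate it as a homomorphism question. Any subdigraph $H$ of $D$ inherits the class-assignment map $V(H)\to\{1,2,3\}$, which is by construction a homomorphism $H\to\overrightarrow{C_3}$; hence it suffices to show that neither $C_5''$ nor $C_5'''$ admits a homomorphism to $\overrightarrow{C_3}$. Identifying $V(\overrightarrow{C_3})$ with $\mathbb{Z}/3\mathbb{Z}$ so that each arc $u\to v$ forces $\phi(v)\equiv \phi(u)+1\pmod 3$, I would simply chase labels along arcs. For $C_5''$, the two directed paths $A\to E\to D\to C$ and $A\to B\to C$ from the unique source $A$ to the unique sink $C$ have lengths $3$ and $2$, so they force $\phi(C)-\phi(A)\equiv 0$ and $\phi(C)-\phi(A)\equiv 2\pmod 3$, a contradiction. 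For $C_5'''$, combining $A\to E\to D$ with $C\to D$ yields $\phi(C)\equiv \phi(A)+1\pmod 3$, after which $C\to B$ gives $\phi(B)\equiv \phi(A)+2$, whereas $A\to B$ gives $\phi(B)\equiv \phi(A)+1$, again contradictory.

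Overall the proof should be short. The only conceptual move is the reduction from subdigraph containment in a $\overrightarrow{C_3}$-blow-up to existence of a homomorphism into $\overrightarrow{C_3}$; after that, the two non-embedding claims each collapse to a two-line arithmetic check in $\mathbb{Z}/3\mathbb{Z}$, and I do not anticipate any real obstacle.
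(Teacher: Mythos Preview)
Your proposal is correct and follows essentially the same approach as the paper: the paper also dispatches $\chi(D)=3$ and $\delta^{+}(D)=\lfloor n/3\rfloor$ as clear, and then argues that any oriented $C_5$ in $D$ admits a homomorphism to $\overrightarrow{C_3}$ while neither $C_5''$ nor $C_5'''$ does. You spell out the $\mathbb{Z}/3\mathbb{Z}$ arithmetic that the paper leaves implicit, but the underlying idea is identical.
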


\begin{proof}
  Let $D=\overrightarrow{C_3}[n_1, n_2, n_3]$.
     Clearly, $\chi(D)=3$ and $\delta^+(D)=\lfloor n/3 \rfloor$.
    Since every oriented $C_5$ in $D$ admits a homomorphism to $\overrightarrow{C_3}$, whereas neither $C''_5$ or $C'''_5$ admits a homomorphism to $\overrightarrow{C_3}$, $D$ is $\{C_{5}'', C_{5}'''\}$-free.
\end{proof}

The proof of the upper bound requires several auxiliary results. 

\begin{lemma}\label{fact}
    Let $k\ge 2$ be an integer and $\overrightarrow{P}$ an oriented path on $k$ vertices.
    Then there exists an integer $t\le k$ such that $\overrightarrow{P}\xrightarrow{\textup{hom}} \overrightarrow{P_t}$.
\end{lemma}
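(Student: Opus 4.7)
The plan is to construct an explicit homomorphism via a \emph{height function}. Label the vertices of $\overrightarrow{P}$ along the path as $v_1,v_2,\ldots,v_k$ (so that $v_iv_{i+1}$ is an edge for each $i\in[k-1]$, regardless of orientation), and define $h:V(\overrightarrow{P})\to\mathbb{Z}$ inductively by $h(v_1)=0$ and
\[
h(v_{i+1})=\begin{cases} h(v_i)+1, & \text{if } v_i\to v_{i+1},\\ h(v_i)-1, & \text{if } v_{i+1}\to v_i.\end{cases}
\]
So $h$ is a random-walk–like encoding of the arc pattern, and by construction every arc of $\overrightarrow{P}$ goes from some vertex at height $j$ to some vertex at height $j+1$.

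Next, let $m=\min_{i} h(v_i)$ and $M=\max_{i} h(v_i)$, and set $t:=M-m+1$. Define $\phi:V(\overrightarrow{P})\to[t]$ by $\phi(v_i)=h(v_i)-m+1$. Then for any arc $v_i\to v_{i+1}$ of $\overrightarrow{P}$ we have $\phi(v_{i+1})=\phi(v_i)+1$, which is precisely an arc of the transitive directed path $\overrightarrow{P_t}$ (with vertices $1,2,\ldots,t$ and arcs $j\to j+1$). Hence $\phi$ is a homomorphism $\overrightarrow{P}\xrightarrow{\textup{hom}}\overrightarrow{P_t}$.

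Finally, I must show $t\le k$. Since consecutive values $h(v_i),h(v_{i+1})$ differ by exactly $1$, a straightforward induction gives $|h(v_i)-h(v_1)|\le i-1$, so the set $\{h(v_1),\ldots,h(v_k)\}$ is contained in an interval of length at most $k-1$. Therefore $M-m\le k-1$, equivalently $t\le k$, as required.

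There is no real obstacle here: the only subtle point is recognizing that the ``height encoding'' of an oriented path is exactly what turns it into a directed-path target, and that the diameter of the height sequence is bounded by the length of the path. Everything else is immediate from the definitions.
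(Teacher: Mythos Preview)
Your proof is correct and takes a genuinely different route from the paper. The paper argues by induction on $k$: it strips off the last vertex $v_k$, applies the induction hypothesis to the shorter path to get a homomorphism into $\overrightarrow{P_{t'}}$, and then extends the target path by one vertex at the appropriate end to accommodate $v_k$. Your height-function construction is a direct, one-shot argument: it produces the homomorphism explicitly and identifies $t$ as the number of distinct heights, with no induction needed. Your approach is cleaner and more informative (it makes the optimal $t$ visible as $M-m+1$), while the paper's induction is more mechanical. One small wording point: the bound $|h(v_i)-h(v_1)|\le i-1$ alone only places the heights in an interval of length $2(k-1)$; what you actually want is the slightly stronger (but equally easy) $|h(v_i)-h(v_j)|\le |i-j|$, applied at the indices realizing $M$ and $m$. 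This is implicit in what you wrote and does not affect correctness.
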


\begin{proof}
   We argue by induction on $k$. The case $k=2$ is trivial.
    For the inductive step, assume the statement holds for paths with fewer than $k$ vertices, where $k\ge 3$.
    Let $\overrightarrow{P}$ have vertex sequence $v_1,v_2,\dots,v_k$, and let $\overrightarrow{P'}$ denote the subpath from $v_1$ to $v_{k-1}$.
    By the inductive hypothesis, there exist $t'\le k-1$ and a homomorphism $\phi':\overrightarrow{P'}\xrightarrow{\textup{hom}} \overrightarrow{P_{t'}}$ with image vertices $u_1,u_2,\dots,u_{t'}$ in order.
    Set $t=t'+1$, and extend $\overrightarrow{P_{t'}}$ to $\overrightarrow{P_{t}}=(u_1,\ldots,u_{t'},u_{t'+1})$ and $\overrightarrow{P'_{t}}=(u_0,u_1,\ldots,u_{t'})$.
    Note $\overrightarrow{P_{t}}\cong\overrightarrow{P'_{t}}$.

    To extend $\phi'$ to $\phi$ on $v_k$, consider the last arc between $v_{k-1}$ and $v_k$:
\begin{align*}
\phi(v_i)=
\begin{cases}
\phi'(v_i), & 1\le i\le k-1,\\
u_{p+1}, & i=k,~\phi'(v_{k-1})=u_p,\text{ and }v_{k-1}\to v_k,\\
u_{p-1}, & i=k,~\phi'(v_{k-1})=u_p,\text{ and }v_{k}\to v_{k-1}.
\end{cases}
\end{align*}
The orientation of the last arc is preserved by mapping $v_k$ to the predecessor/successor of $u_p$ accordingly.
Hence $\phi:V(\overrightarrow{P_{k}})\to V(\overrightarrow{P_{t}})$ is a homomorphism with $t\le k$.
\end{proof}

\begin{lemma}[Gao, Liu, Wu and Xue, \cite{rainbow}]\label{prop:intersection}
    Let $t\in \mathbb{N}$ and $\varepsilon\in (0,1)$. Then there exist $\alpha =\alpha(\varepsilon,t)$ and $m=m(\varepsilon,t)$ such that for any $V_1, V_2, \cdots, V_m\subseteq [n]$ each with size at least $\varepsilon n$, there exist $1\le i_1< i_2<\dots < i_t\le m$ with
    $|\bigcap_{j=1}^{t} V_{i_j}| \ge \alpha n. $
\end{lemma}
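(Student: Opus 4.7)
The plan is to prove this lemma by a short double-counting / random-sampling argument. For each $x \in [n]$, set $d(x) = |\{i \in [m] : x \in V_i\}|$. The hypothesis gives
\begin{align*}
\sum_{x \in [n]} d(x) = \sum_{i=1}^{m} |V_i| \geq \varepsilon m n,
\end{align*}
so the average of $d(x)/m$ over $x \in [n]$ is at least $\varepsilon$.

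Next I would sample indices $i_1,\dots,i_t$ independently and uniformly from $[m]$. By linearity of expectation and independence of the $i_j$,
\begin{align*}
\mathbb{E}\bigl[\,|V_{i_1} \cap \cdots \cap V_{i_t}|\,\bigr]
= \sum_{x \in [n]} \left(\frac{d(x)}{m}\right)^{t}
\geq n\,\varepsilon^{t},
\end{align*}
where the final step applies Jensen's inequality to the convex map $y \mapsto y^{t}$ on $[0,\infty)$ for the uniform distribution on $\{d(x)/m : x \in [n]\}$.

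The sampled indices need not be distinct, but the probability that two coincide is at most $\binom{t}{2}/m$. So I would choose $m = m(\varepsilon,t)$ large enough, say $m \geq t^{2}/\varepsilon^{t}$, to force this probability to be at most $\varepsilon^{t}/2$. Since the intersection size is always at most $n$, conditioning on the event that the $i_j$ are pairwise distinct decreases the expected intersection by at most $n\cdot \varepsilon^{t}/2$. Hence there exist pairwise distinct indices which, after relabeling, satisfy $1 \leq i_1 < i_2 < \cdots < i_t \leq m$ and $\bigl|\bigcap_{j=1}^{t}V_{i_j}\bigr| \geq n\,\varepsilon^{t}/2$. Setting $\alpha(\varepsilon,t) = \varepsilon^{t}/2$ closes the proof.

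The only subtlety is handling the coincidence event without inflating the constants. An alternative that sidesteps this is to sample $I$ directly from $\binom{[m]}{t}$ and use
\begin{align*}
\Pr\bigl[x \in \textstyle\bigcap_{i \in I} V_i\bigr] \;=\; \frac{\binom{d(x)}{t}}{\binom{m}{t}} \;\geq\; \Bigl(\frac{d(x)-t+1}{m}\Bigr)^{t}
\end{align*}
whenever $d(x)\geq t$, combined with $\sum_{x:\,d(x)\geq 2t} d(x) \geq \varepsilon m n/2$ for $m \geq 4t/\varepsilon$. Either route yields the same $\alpha$ and $m$ up to constants, and no step is harder than routine bookkeeping.
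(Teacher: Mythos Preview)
Your argument is correct. The double-counting/Jensen step gives $\mathbb{E}[|V_{i_1}\cap\cdots\cap V_{i_t}|]\ge n\varepsilon^t$, and your handling of the collision event is sound: writing $\mathbb{E}[|I|]=\mathbb{E}[|I|\mathbf{1}_E]+\mathbb{E}[|I|\mathbf{1}_{\bar E}]$ with $\mathbb{E}[|I|\mathbf{1}_{\bar E}]\le n\Pr[\bar E]\le n\varepsilon^t/2$ gives $\mathbb{E}[|I|\mid E]\ge \mathbb{E}[|I|\mathbf{1}_E]\ge n\varepsilon^t/2$, so some distinct $t$-tuple works with $\alpha=\varepsilon^t/2$.

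As for comparison: the paper does not prove this lemma at all. It is quoted as a black box from an external reference (Gao, Liu, Wu and Xue), and is used only to derive Corollary~\ref{cor:path-hom}. So you have supplied a self-contained proof where the paper relies on a citation; your argument is the standard convexity/averaging proof and is entirely adequate for the paper's purposes. The alternative route you sketch (sampling a uniform $t$-subset directly) also works but is not needed---your first argument is already clean.
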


Let $D$ be a digraph with minimum out-degree of linear size $n$.  
\cref{prop:intersection} implies that among any $m$ vertices of $D$, there are $k$ vertices that share a common out-neighborhood of linear size $n$.
Iterating the above proposition yields the following corollary.

\begin{cor}\label{cor:path-hom}
  Let $D$ be a digraph on $n$ vertices with $\delta^+(D)\ge \varepsilon n$.
    For any $k$ and $\varepsilon>0$, there exists an $\alpha':=\alpha'(\varepsilon,k)$ such that, there exist disjoint vertex sets $X_1,\ldots,X_k\subseteq V(D)$ such that $|X_i|\ge\alpha' n$ for all $i\in[k]$ and 
\begin{align*}
    X_{i+1}\subseteq \bigcap_{v\in X_{i}} N^+(v),\quad \forall~ i\in [k-1].
\end{align*}
\end{cor}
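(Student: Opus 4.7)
The plan is to construct the sets $X_1, \ldots, X_k$ iteratively by repeated application of \cref{prop:intersection}. I would argue by induction on $i$, maintaining the invariant that after step $i$ we have pairwise disjoint $X_1, \ldots, X_i$ satisfying the chain condition and such that their common out-neighborhood $W_i := \bigcap_{v \in X_i} N^+(v)$ has size at least $\alpha_i n$ for some positive constant $\alpha_i = \alpha_i(\varepsilon, k)$.

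Setting $W_0 = V(D)$ provides the base case. For the inductive step, I would consider the family of out-neighborhoods $\{N^+(u) : u \in W_i \setminus (X_1 \cup \cdots \cup X_i)\}$. Each set in this family lies in $V(D)$ and has size at least $\varepsilon n$; moreover, the number of such sets is at least $|W_i| - O(1) \ge \alpha_i n - O(1)$, which for $n$ sufficiently large dominates the threshold $m(\varepsilon,t)$ from \cref{prop:intersection} with an appropriately chosen $t = t(\varepsilon,k)$. The lemma then produces a subcollection of $t$ out-neighborhoods whose intersection is $W_{i+1}$ of size at least $\alpha(\varepsilon,t)\,n$, and the corresponding $t$ source vertices constitute $X_{i+1} \subseteq W_i \setminus (X_1 \cup \cdots \cup X_i)$.

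The chain condition $X_{i+1} \subseteq W_i = \bigcap_{v \in X_i} N^+(v)$ is built into the construction. Disjointness of $X_{i+1}$ from $X_1, \ldots, X_{i-1}$ is enforced by explicitly excluding them from the candidate family, while disjointness from $X_i$ itself is automatic because $D$ has no loops: any $v \in X_i$ would need a self-loop $v \to v$ in order to lie in $W_i$. Iterating this procedure $k$ times yields the desired chain.

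The main obstacle I anticipate is reconciling the construction with the stated lower bound $|X_i| \ge \alpha'(\varepsilon, k)\,n$, since a single invocation of \cref{prop:intersection} natively furnishes $X_{i+1}$ of only constant size $t$. The resolution will come from a careful choice of parameters, either by repeating \cref{prop:intersection} several times within a single step to accumulate enough source vertices, or by enlarging $X_{i+1}$ post-hoc using the large common out-neighborhood already secured; the bookkeeping on the decay of the constants $\alpha_1 \ge \alpha_2 \ge \cdots \ge \alpha_k$ across the $k$ iterations is the core technical point, and one sets $\alpha'(\varepsilon,k) := \min_i \alpha_i > 0$ at the end.
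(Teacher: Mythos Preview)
Your iterative scheme is exactly what the paper has in mind---its entire proof is the single sentence ``Iterating the above proposition yields the following corollary''---and you have correctly isolated the crux: one invocation of \cref{prop:intersection} produces only a \emph{constant} number $t$ of source vertices, not $\alpha' n$. Unfortunately the two workarounds you propose (accumulating sources over repeated calls, or enlarging $X_{i+1}$ inside the already-secured common out-neighbourhood) cannot rescue the argument, because the statement with linear-size $X_i$ is in fact \emph{false}. Already for $k=2$ the conclusion demands disjoint sets $X_1,X_2$ of size $\alpha' n$ with every vertex of $X_1$ dominating every vertex of $X_2$, i.e.\ a directed biclique on $2\alpha' n$ vertices. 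In the random digraph where each $N^{+}(v)$ is an independent uniformly random $\lceil\varepsilon n\rceil$-subset of $V(D)\setminus\{v\}$, a fixed pair $(X_1,X_2)$ with $|X_1|=|X_2|=\alpha' n$ succeeds with probability at most $\varepsilon^{\Theta((\alpha' n)^{2})}$, and a union bound over the at most $2^{2n}$ candidate pairs shows that with high probability no such biclique exists.

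What the iteration genuinely delivers---and all that the paper ever uses (see the proof of \cref{cor:cycles}, where only $|X_i|>k$ for a fixed constant $k$ is needed)---is the weaker statement in which each $|X_i|$ equals any prescribed constant $t$, provided $n$ is large. Your inductive construction already proves exactly this: at step $i$, apply \cref{prop:intersection} with parameter $t$ to the family $\{N^{+}(u):u\in W_{i-1}\setminus(X_1\cup\cdots\cup X_{i-1})\}$ (legitimate since $|W_{i-1}|\ge\alpha_{i-1}n\ge m(\varepsilon,t)$ for $n$ large) to obtain $X_i\subseteq W_{i-1}$ of size $t$ and a new $W_i$ of size at least $\alpha(\varepsilon,t)\,n$. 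So the ``obstacle'' you flagged is not a gap in your plan but an overstatement in the corollary itself; drop the final ``enlarging'' step and your argument goes through for the version that is both true and sufficient.
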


\begin{lemma}\label{cor:cycles}
Let $0<\varepsilon \le 1/2$ and $k>1$ be an integer.
    Suppose $D$ is a digraph on $n$ vertices with $\delta^{+}(D) \geq (1/2+\varepsilon)n$, where $n$ is sufficiently large.
   Then $D$ contains every orientation of $C_k$.
\end{lemma}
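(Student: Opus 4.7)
The plan is to split by whether $\vec{C}$ is the directed cycle $\overrightarrow{C}_{k}$ or not (the case $k=2$ is trivial, so take $k \ge 3$). In the first case, $\delta^{+}(D) \ge (1/2+\varepsilon)n$ dominates the bound $(n+k-2)/2$ for $n$ sufficiently large, so \cref{thm:directed-odd-cycle-upper-bdd} immediately produces $\overrightarrow{C}_{k} \subseteq D$. Otherwise, since the only orientation of $C_{k}$ in which every vertex has in-degree one is the directed cycle, $\vec{C}$ must contain a source vertex $v_{j}$, and I use this source to anchor the embedding.

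Pick any $v^{*} \in V(D)$ to play the role of $v_{j}$, and set $N := N^{+}(v^{*})$. Then $|N| \ge (1/2+\varepsilon)n$ and $\delta^{+}(D[N]) \ge \delta^{+}(D) - (n - |N|) \ge 2\varepsilon n \ge 2\varepsilon\,|N|$, so \cref{cor:path-hom} applies inside $D[N]$ and yields disjoint sets $X_{1}, \ldots, X_{k-1} \subseteq N$, each of linear size, such that every vertex of $X_{i}$ dominates every vertex of $X_{i+1}$. Next, let $\vec{P}$ denote the oriented path on $k-1$ vertices obtained from $\vec{C}$ by deleting $v_{j}$; its endpoints are the cycle-neighbors $v_{j-1}$ and $v_{j+1}$ of $v_{j}$. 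By \cref{fact}, there is a homomorphism $\phi : \vec{P} \to \overrightarrow{P}_{t}$ for some $t \le k-1$. I then pick a distinct representative $\tilde v_{m} \in X_{\phi(v_{m})}$ for each vertex $v_{m}$ of $\vec{P}$; this is possible because the $X_{i}$'s are disjoint and each has size exceeding $k-1$ once $n$ is large. The blow-up property of the $X_{i}$'s automatically forces every arc of $\vec{P}$ to be realised in $D$ with the correct orientation, and since $\tilde v_{j-1},\tilde v_{j+1} \in N = N^{+}(v^{*})$, the two source-arcs $v^{*} \to \tilde v_{j-1}$ and $v^{*} \to \tilde v_{j+1}$ close up the desired copy of $\vec{C}$.

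The main obstacle is the asymmetry of the hypothesis: only $\delta^{+}$ is bounded, so the mirror-image trick of anchoring at a sink fails because $N^{-}(v^{*})$ need not have linear size. This is exactly why the directed cycle, which is the unique orientation of $C_{k}$ with no source, must be dispatched separately by \cref{thm:directed-odd-cycle-upper-bdd}. A related subtlety is that \cref{fact} supplies homomorphisms only from oriented paths to directed paths --- oriented cycles of odd length do not factor through any directed path --- so ``opening up'' $\vec{C}$ at a source before invoking \cref{cor:path-hom} is essential.
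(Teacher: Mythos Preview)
Your proof is correct and follows the same overall strategy as the paper: dispatch $\overrightarrow{C}_k$ via \cref{thm:directed-odd-cycle-upper-bdd}, and for every other orientation delete a degenerate vertex, embed the remaining oriented path via \cref{fact} together with \cref{cor:path-hom}, then close the cycle. The one difference is a dual twist. The paper deletes a \emph{sink}, embeds the path $P'$ freely in all of $D$, and only then invokes $\delta^{+}\ge(1/2+\varepsilon)n$ to find a common out-neighbour $v'\in N^{+}(u)\cap N^{+}(w)$ of the two endpoints, which plays the sink. You instead delete a \emph{source}, fix $v^{*}$ first, and embed the path entirely inside $N^{+}(v^{*})$, so the two closing arcs $v^{*}\to\tilde v_{j\pm 1}$ are automatic. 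Your route buys a cleaner closing step at the cost of checking that $\delta^{+}(D[N^{+}(v^{*})])$ stays linear --- which it does, by the same $(1/2+\varepsilon)$ hypothesis --- while the paper's route avoids restricting to a subgraph but needs the common-out-neighbour count at the end. Both uses of the $(1/2+\varepsilon)$ bound are essentially the same inequality in disguise.
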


\begin{proof}
   By \cref{thm:directed-odd-cycle-upper-bdd}, $D$ contains a directed $k$-cycle.
   Suppose $C$ is an oriented $k$-cycle that is not isomorphic to $\overrightarrow{C_k}$.
   It is a known property that any oriented cycle that is not a directed cycle must contain a sink. We can thus choose $v$ to be such a sink vertex.
   Then $P=C-v$ is an oriented path of length $k-1$.
   We first show that there is an oriented path $P'$ in $D$ with start vertex $u$ and end vertex $w$ that is isomorphic to $P$.
   By \cref{fact}, it suffices to show that there exists a $k$ blow-up of $\overrightarrow{P_t}$, where $\overrightarrow{P'}\xrightarrow{\textup{hom}} \overrightarrow{P_t}$.
   By \cref{cor:path-hom}, there exists an $\alpha':=\alpha'(t)$ such that, there exist disjoint vertex sets $X_1,\ldots,X_t\subseteq V(D)$ such that $|X_i|\ge \alpha'n> k$ for all $i\in[t]$ and 
\begin{align*}
    X_{i+1}\subseteq \bigcap_{v\in X_{i}} N^+(v),\quad \forall~ i\in [t-1].
\end{align*}
    We can now embed the path $P'$ into $D$ by choosing its vertices greedily. Since the sets $X_i$ are large and fully connected to the next set in the sequence, we can always pick a vertex for the path that satisfies the required adjacencies and has not been picked before.
   Since $\delta^{+}(D) \geq (1/2+\varepsilon)n$, there is a vertex $v'\in (N^+(u)\cap N^+(w))\setminus V(P')$.
   Then $P'$ and $v'$ form a copy of $C$.
\end{proof}

The core of our argument is the following lemma, which guarantees the existence of all three pentagon orientations in any sufficiently dense digraph that contains a triangle.

\begin{lemma}\label{lmm:og=3}
    Let $\varepsilon>0$ and $D$ be a digraph with $\delta^+(D)\ge (1/3+\varepsilon)|D|$ and $g_{\text{odd}}(D)=3$.
    Then $D$ contains $C_5'$, $C''_5$ and $C'''_5$.
\end{lemma}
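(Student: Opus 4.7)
The plan is to use the triangle $T$ guaranteed by $g_{\text{odd}}(D)=3$, together with the density condition, to construct each of $C_5'$, $C_5''$, and $C_5'''$ directly, via a case analysis on the orientation of $T$.

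Fix a triangle $T=\{x_1,x_2,x_3\}$ in $G(D)$. Since each $|N^+(x_i)|\ge (1/3+\varepsilon)n$ while $\bigl|\bigcup_{i=1}^{3} N^+(x_i)\bigr|\le n$, inclusion--exclusion gives
\begin{align*}
\sum_{1\le i<j\le 3}|N^+(x_i)\cap N^+(x_j)|\ge 3\varepsilon n,
\end{align*}
so, after relabeling, one may assume $|W|\ge \varepsilon n$ for $W:=N^+(x_1)\cap N^+(x_2)$. Every $w\in W$ satisfies $x_1\to w$ and $x_2\to w$, so adjoining $w$ to $T$ produces a dense four-vertex structure whose further pairwise common out-neighborhoods can again be bounded below by inclusion--exclusion. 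When additional path structure is needed, I would also appeal to \cref{cor:path-hom} to produce a blow-up of a short directed path, to be attached to $T$.

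Next I would case-split on the orientation of $T$: (i) $T$ is a directed 3-cycle; (ii) $T$ is a transitive tournament; (iii) $T$ contains at least one anti-parallel pair of arcs. In cases (ii) and (iii), the directed 2-paths already present inside $T$ can serve as part of the target pentagon, so the constructions are relatively routine. In each case and for each $H\in\{C_5',C_5'',C_5'''\}$, I would exhibit an embedding of $H$ on a vertex set of the form $\{x_1,x_2,x_3,w,v\}$ for a suitable $w\in W$ and a fifth vertex $v$ chosen so that the orientation-specific arcs at $v$ are present. The vertex $v$ is selected from a linearly large intersection of out-neighborhoods --- obtained either from a further application of inclusion--exclusion (now on four vertices including $w$) or from the blow-up given by \cref{cor:path-hom} --- so that the few already-used vertices can be avoided routinely.

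The main obstacle is the case analysis itself, in particular the directed 3-cycle case. The pentagons $C_5''$ and $C_5'''$ do not admit a homomorphism to $\overrightarrow{C_3}$, which is the structural reason why $\overrightarrow{C_3}[n_1,n_2,n_3]$ avoids them; this rigidity is what must be broken by the fifth vertex $v$. By contrast, $C_5'$ does hom-map to $\overrightarrow{C_3}$, so a separate, more delicate argument is needed for it, matching the distinct extremal construction $B_n$. In each sub-case, the density bound must be used to guarantee that the intersection of out-neighborhoods from which $v$ is drawn remains non-empty after excluding already-used vertices, and verifying this in every sub-case is where most of the technical effort lies.
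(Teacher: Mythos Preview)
Your opening step---using inclusion--exclusion on the triangle to locate a pair with $|N^+(x_i)\cap N^+(x_j)|\ge \varepsilon n$---is exactly how the paper begins. But from there the approaches diverge sharply, and your scheme of embedding each pentagon on $\{x_1,x_2,x_3,w,v\}$ is too rigid to succeed. The paper never case-splits on the orientation of the triangle and never insists on keeping all three triangle vertices. For $C_5''$ it discards the third triangle vertex entirely: from an arc $u\to v$ inside the triangle it takes three common out-neighbours $x_1,x_2,x_3\in N^+(u)\cap N^+(v)$, re-applies inclusion--exclusion to this new triple, and finds a further common out-neighbour $y$; the pentagon lives on $\{u,v,x_1,x_3,y\}$. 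For $C_5'$ it then starts from the $C_5''$ just built (not from the triangle) and uses a dichotomy on $|N^+(v_1)\cap N^+(v_4)|$ to route a fourth arc. For $C_5'''$ there is no direct construction at all: the paper argues by contradiction, finds an arc $(u_1,u_2)$ with few common out-neighbours, and proves via three structural claims that $G(D-\{u_1,u_2\})$ must be bipartite, which is incompatible with $g_{\text{odd}}(D)=3$.

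The concrete obstruction to your plan is the $C_5'''$ case when the triangle is a directed $3$-cycle $x_1\to x_2\to x_3\to x_1$. Any embedding of $C_5'''$ on $\{x_1,x_2,x_3,w,v\}$ with $w$ a sink must place $w$'s two cycle-neighbours among $\{x_1,x_2\}$ (you have no control over $x_3\to w$), forcing $v_5=x_3$ and hence requiring $v\in N^+(x_1)\cap N^+(x_3)$ or $v\in N^+(x_2)\cap N^+(x_3)$. But $\delta^+\ge (1/3+\varepsilon)n$ gives no lower bound on either of these intersections; only the sum $\sum_{i<j}|N^+(x_i)\cap N^+(x_j)|\ge 3\varepsilon n$ is controlled, and that slack may sit entirely in the pair $(x_1,x_2)$ you already used. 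Inclusion--exclusion on four vertices including $w$ has the same defect (it finds \emph{some} pair, not a prescribed one), and \cref{cor:path-hom} produces a directed-path blow-up unattached to $T$, which does not supply the specific in-arcs $C_5'''$ needs. This is precisely why the paper abandons direct construction for $C_5'''$ and instead proves that a $C_5'''$-free digraph with this minimum out-degree is essentially bipartite.
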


\begin{proof}
Suppose $D$ is an $n$-vertex digraph with $\delta^+(D)\ge (1/3+\varepsilon)n$ and $g_{\text{odd}}(D)=3$.
We prove the existence of each orientation in turn.

\noindent\textbf{(i) Existence of $C''_5$.}

Let $\{u, v, w\}$ be the vertices of a triangle in $G(D)$. 
A standard averaging argument on the out-neighborhoods of these three vertices shows that at least one pair, say $\{u,v\}$, must have at least $\varepsilon n$ common out-neighbors. 
Assume without loss of generality that $u\rightarrow v$. 
Select any three distinct vertices $x_1, x_2, x_3 \in N^+(u) \cap N^+(v)$. 
Again by averaging, two of these, say $x_1$ and $x_3$, share at least $\varepsilon n$ common out-neighbors. 
Choose $y\in N^+(x_1) \cap N^+(x_3)\setminus \{u,v\}$, then the vertices $\{u, v, x_1, x_3, y\}$ form a copy of $C_5''$, with directed paths $u \to v \to x_1 \to y$ and $u \to x_3 \to y$ (see \cref{fig:C''}).

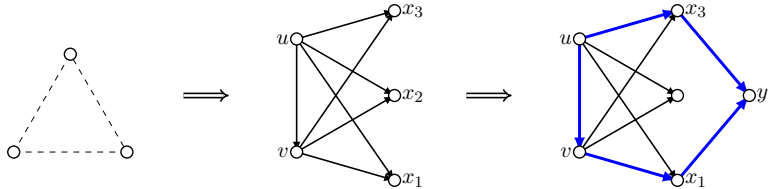
\begin{figure}[H]
\centering
\resizebox{0.6\linewidth}{!}{%
\begin{tikzpicture}[scale=1] 
  \coordinate (A) at (0,0); 
  \coordinate (B) at (2,0); 
  \coordinate (C) at (1,1.73); 

  \coordinate (A1) at (5,0);
  \coordinate (B1) at (5,2);
  \coordinate (C1) at (6.73,1);
  \coordinate (C2) at (6.73,1+1.5);
  \coordinate (C3) at (6.73,1-1.5);

  \draw[dashed] (A) -- (B) -- (C)  -- cycle;  

  \draw[latex-, thick] (A1) -- (B1);
  \draw[-latex, thick] (A1) -- (C1);
  \draw[-latex, thick] (A1) -- (C2);
  \draw[-latex, thick] (A1) -- (C3);
  \draw[-latex, thick] (B1) -- (C1);
  \draw[-latex, thick] (B1) -- (C2);
  \draw[-latex, thick] (B1) -- (C3);

  \coordinate (D1) at (5+5,0);
  \coordinate (E1) at (5+5,2);
  \coordinate (F1) at (6.73+5,1);
  \coordinate (F2) at (6.73+5,1+1.5);
  \coordinate (F3) at (6.73+5,1-1.5);
  \coordinate (G) at (13,1);

  \draw[latex-, blue, line width=1.501pt] (D1) -- (E1);
  \draw[-latex, thick] (D1) -- (F1);
  \draw[-latex, thick] (D1) -- (F2);
  \draw[-latex, blue, line width=1.501pt] (D1) -- (F3);
  \draw[-latex, thick] (E1) -- (F1);
  \draw[-latex, blue, line width=1.501pt] (E1) -- (F2);
  \draw[-latex, thick] (E1) -- (F3);
  \draw[-latex, blue, line width=1.501pt] (F2) -- (G);
  \draw[-latex, blue, line width=1.501pt] (F3) -- (G);

  \foreach \i in {A,B,C}
   \draw[fill=white, line width=0.701pt] (\i) circle (3pt);

   \foreach \i in {A1,B1,C1,C2,C3,D1,E1,F1,F2,F3,G}
   \draw[fill=white, line width=0.701pt] (\i) circle (3pt);

    \coordinate (mid1) at (3,1.00001);   
    \coordinate (mid2) at (3.8,1.00001); 
    \draw[-Implies, double equal sign distance, thick] (mid1) -- (mid2);
    \coordinate (mid3) at (3+5,1.00001);   
    \coordinate (mid4) at (3.8+5,1.00001); 
    \draw[-Implies, double equal sign distance, thick] (mid3) -- (mid4);
    \node[left] at (B1) {$u$};
    \node[left] at (A1) {$v$};
    \node[left] at (D1) {$v$};
    \node[left] at (E1) {$u$};
    \node[right] at (C2) {$x_3$};
    \node[right] at (C3) {$x_1$};
    \node[right] at (C1) {$x_2$};
    \node[right] at (F2) {$x_3$};
    \node[right] at (F3) {$x_1$};
    \node[right] at (G) {$y$};
\end{tikzpicture}
}
\caption{Find $C''_5$ starting from an arbitrary triangle.}
\label{fig:C''}
\end{figure}

\noindent\textbf{(ii) Existence of $C'_5$.}

From part (i), we know $D$ contains a $C_5''$. Let its vertices be $\{v_1, \dots, v_5\}$ with arcs forming paths $v_1 \to v_2 \to v_3 \to v_4$ and $v_1 \to v_5 \to v_4$.
If $|N^+(v_1) \cap N^+(v_4)| \ge 3$, a $C_5'$ is easily found.
Thus we have $|N^+(v_1) \cap N^+(v_4)| \le 2$.
Define disjoint sets
\[
X = N^+(v_4) \setminus N^+(v_1), \quad Y = N^+(v_1) \setminus N^+(v_4), \quad Z = V(D) \setminus (X \cup Y).
\]
We have $|X|, |Y| \ge (1/3+\varepsilon)n - 2 > n/3$ and $|Z| \le (1/3 - 2\varepsilon)n + 2 < (1/3 - \varepsilon)n$.

If $\delta^+(D[X]) \ge 4$, then there is a directed path $x_1 \to x_2 \to x_3 \to x_4$ within $X$. 
This yields $C_5'$ composed of directed paths $v_4 \to x_1 \to x_2 \to x_3 \to x_4$ and $v_4\to x_4$. Otherwise, there exists $x \in X$ with $d^+(x,X) < 4$. The minimum out-degree implies:
\[
d^+(x,Y) \ge \delta^+(D) - d^+(x,X) - |Z| > (1/3+\varepsilon)n - 4 - (1/3 - \varepsilon)n - 2 > 0.
\]
Select $y \in N^+(x) \cap Y$. Then the vertices $\{v_1,v_5,v_4,x, y\}$ form a copy of $C_5'$, with directed paths $v_1\to v_5\to v_4\to x\to y$ and $v_1 \to y$ (see \cref{fig:C'}).

\begin{figure}[H]
\centering
\resizebox{0.27\linewidth}{!}{%
\begin{tikzpicture}[scale=1] 


  \coordinate (D1) at (5+5,0);
  \coordinate (E1) at (5+5,2);
  \coordinate (F2) at (6.73+5,1+1.5);
  \coordinate (F3) at (6.73+5,1-1.5);
  \coordinate (G) at (13,1);
  \coordinate (y) at (14.5,2.2);
  \coordinate (y') at (15,2.2);
  \coordinate (x) at (14.5,-0.2);
  \coordinate (x') at (15,-0.2);
  \coordinate (y1) at (14.2,2.8);
  \coordinate (y2) at (14.15,1.6);
  \coordinate (x1) at (14.2,-0.8);
  \coordinate (x2) at (14.15,0.4);

  \draw[-latex, red, line width=1.501pt] (E1) -- (D1);
  \draw[-latex, red, line width=1.501pt] (D1) -- (F3);
  \draw[-latex, red, line width=1.501pt] (F2) -- (E1);
  \draw[-latex, blue, line width=1.901pt] (F2) -- (G);
  \draw[-latex, blue, line width=1.901pt] (G) -- (F3);
  \draw[-latex, blue, line width=1.901pt] (F2) -- (y);
  \draw[-latex, blue, line width=1.901pt] (F3) -- (x);
  \draw[-latex, blue, line width=1.901pt] (x) -- (y);
  \draw[-latex, line width=1.001pt] (F2) -- (y1);
  \draw[-latex, line width=1.001pt] (F2) -- (y2);
  \draw[-latex, line width=1.001pt] (F3) -- (x1);
  \draw[-latex, line width=1.001pt] (F3) -- (x2);

  \draw[line width=1.201pt] (14.5, 2.2) ellipse (0.5cm and 0.8cm);
  \draw[line width=1.201pt] (14.5, -0.2) ellipse (0.5cm and 0.8cm);


   \foreach \i in {D1,E1,F2,F3,G,x,y}
   \draw[fill=white, line width=0.701pt] (\i) circle (3pt);
   \node[right] at (x') {\large $X$};
   \node[right] at (y') {\large $Y$};
   \node[below] at (x) {$x$};
   \node[above] at (y) {$y$};
   \node[above] at (F2) {$v_1$};
   \node[below] at (F3) {$v_4$};
   \node[right] at (G) {$v_5$};
   \node[left] at (D1) {$v_3$};
   \node[left] at (E1) {$v_2$};
\end{tikzpicture}
}
\caption{Find $C'_5$ starting from a $C''_5$.}
\label{fig:C'}
\end{figure}
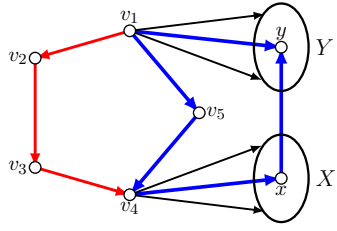

\noindent\textbf{(iii) Existence of $C'''_5$.}

We proceed by contradiction, assuming $D$ is $C_5'''$-free. First, there must exist an arc $(u_1, u_2)$ with $|N^+(u_1) \cap N^+(u_2)|\le 2$. 
Otherwise, the two endpoints of every arc have at least three common out-neighbors, implying a copy of $C_5'''$ (see \cref{fig:C'''}).

\begin{figure}[ht]
\centering
\resizebox{0.6\linewidth}{!}{%
\begin{tikzpicture}[scale=1] 
  \coordinate (A) at (0,0); 
  \coordinate (B) at (2,0); 
  \coordinate (C) at (2,2); 
  \coordinate (D) at (0,2); 
  \coordinate (D') at (0-5,2); 
  \coordinate (B') at (2-5,0); 
  \draw[dashed, thick] (B') -- (D');

  \coordinate (A1) at (0+5,0); 
  \coordinate (B1) at (2+5,0); 
  \coordinate (C1) at (2+5,2); 
  \coordinate (D1) at (0+5,2); 
  \coordinate (E1) at (8.5,1); 

  \draw[-latex, thick] (B) -- (A);
  \draw[-latex, thick] (D) -- (A);
  \draw[-latex, thick] (B) -- (C);
  \draw[-latex, thick] (D) -- (C);

  \draw[-latex, blue, line width=1.501pt] (B1) -- (A1);
  \draw[-latex, blue, line width=1.501pt] (D1) -- (A1);
  \draw[-latex, thick] (B1) -- (C1);
  \draw[-latex, blue, line width=1.501pt] (D1) -- (C1);
  \draw[-latex, blue, line width=1.501pt] (B1) -- (E1);
  \draw[-latex, blue, line width=1.501pt] (C1) -- (E1);

  \foreach \i in {A,B,C,D,B',D'}
   \draw[fill=white, line width=0.701pt] (\i) circle (3pt);

   \foreach \i in {A1,B1,C1,D1,E1}
   \draw[fill=white, line width=0.701pt] (\i) circle (3pt);

    \coordinate (mid1) at (3,1.00001);   
    \coordinate (mid2) at (3.8,1.00001); 
    \draw[-Implies, double equal sign distance, thick] (mid1) -- (mid2);
    \coordinate (mid3) at (3-5,1.00001);   
    \coordinate (mid4) at (3.8-5,1.00001); 
    \draw[-Implies, double equal sign distance, thick] (mid3) -- (mid4);
\end{tikzpicture}
}
\caption{Find $C'''_5$.}
\label{fig:C'''}
\end{figure}

Define:
\[
U_1 = N^+(u_1)\setminus N^+[u_2], \quad U_2 = N^+(u_2)\setminus N^+[u_1], \quad U_3 = V(D) \setminus (U_1 \cup U_2 \cup \{u_1, u_2\}),
\]
where $N^+[u_i]=N^+(u_i)\cup \{u_i\}$.
Then $|U_1|, |U_2| \ge (1/3+\varepsilon)n-3$ and $|U_3| < (1/3 - 2\varepsilon)n$.

\begin{claim}\label{cl:0-epsilon}
    For any $w \in V(D) \setminus \{u_1, u_2\}$, exactly one of $N^+(w) \cap U_1$ and $N^+(w) \cap U_2$ is empty.
    Moreover, if $N^+(w) \cap U_i$ is non-empty, then $|N^+(w) \cap U_i|> 3\varepsilon n$.
\end{claim}

\begin{pf}
    If both intersections were non-empty, there would exist $x \in N^+(w) \cap U_1$ and $y \in N^+(w) \cap U_2$. 
    Then $u_1\to u_2\to y \leftarrow w \to x \leftarrow u_1$ would form a $C_5'''$, a contradiction. 
    Thus at most one intersection is non-empty. 
    If $|N^+(w) \cap U_i| > 0$, then $|N^+(w) \cap U_{3-i}| = 0$, and
    \[
    |N^+(w) \cap U_i| \ge \delta^+(D) - |U_3| > (1/3+\varepsilon)n - (1/3 - 2\varepsilon)n = 3\varepsilon n.
    \]
\end{pf}

\begin{claim}\label{cl:odd-even}
    For any edge $ww'$ in $G[D - \{u_1, u_2\}]$, if $|N^+(w) \cap U_i| = 0$ for some $i \in [2]$, then $|N^+(w') \cap U_{3-i}| = 0$.
\end{claim}

\begin{pf}
    Suppose $|N^+(w) \cap U_i| = 0$. 
    By Claim \ref{cl:0-epsilon}, $|N^+(w) \cap U_{3-i}| > 3\varepsilon n$. 
    If $|N^+(w') \cap U_{3-i}| > 0$, then choose distinct vertices $x$, $y$ such that $x\in N^+(w) \cap U_{3-i}$ and $y\in N^+(w') \cap U_{3-i}$.
    This yields a copy of $C_5'''$ with $w\to x$, $u_{3-i}\to x$, $u_{3-i}\to y$, $w'\to y$ and $w'\to w$ (or $w\to w'$), a contradiction. 
    Thus $|N^+(w') \cap U_{3-i}| = 0$.
\end{pf}

\begin{claim}\label{cl:connect-to-u}
   For each $i\in[2]$, if $wu_i$ is an edge in $G(D)$, then $|N^+(w) \cap U_i| = 0$.
\end{claim}

\begin{pf}
    By symmetry, we only need to prove the case where $i=1$.
    Let $wu_1$ be an edge in $G(D)$ and suppose to the contrary that $|N^+(w) \cap U_1| > 0$. 
    If there exists $w' \neq w$ with $|N^+(w') \cap U_1| > 0$, then by Claim \ref{cl:0-epsilon}, $|N^+(w) \cap U_{2}| =|N^+(w') \cap U_{2}|= 0$. 
    Hence, we have $|N^+(w) \cap N^+(w')| \ge 2(1/3+\varepsilon)n - (n - |U_{2}|) >2\varepsilon n> 2$, allowing the selection of distinct vertices $x \in N^+(w) \cap N^+(w')$ and $y\in N^+(w')\cap U_1$.
    This yields a copy of $C_5'''$ with $w\to x, w'\to x, w'\to y, u_1\to y$ and $u_1\to w$ (or $w\to u_1$), a contradiction.
    Thus $N^+(w') \cap U_1 = \emptyset$ for all $w' \in V(D) \setminus \{u_1, u_2, w\}$.
    
    Consider the subgraph $D_1 = D - \{u_1, w\} - U_1$. We have:
    \[
    |D_1| = n - |U_1| - 2 \le (2/3 - \varepsilon)n+1.
    \]
    The minimum out-degree in $D_1$ satisfies:
    \[
    \delta^+(D_1) \ge \delta^+(D) - 2 > (1/3+\varepsilon/2)n>(1/2+\varepsilon)|D_1|.
    \]
    By \cref{cor:cycles}, $D_1$ contains $C_5'''$, so does $D$, a contradiction.
\end{pf}

We claim that the underlying graph $G(D - \{u_1, u_2\})$ is bipartite. 
Suppose, for contradiction, it contains an odd cycle $C: v_1 v_2 \dots v_{2\ell+1} v_1$. 
By Claim \ref{cl:0-epsilon}, for each $v_j$, exactly one of $N^+(v_j) \cap U_1$ or $N^+(v_j) \cap U_2$ is empty.
 Assume without loss of generality that $|N^+(v_1) \cap U_1| = 0$. 
Applying Claims \ref{cl:odd-even} iteratively along $C$: 
$|N^+(v_1) \cap U_1| = 0\Rightarrow |N^+(v_2) \cap U_2|=0\Rightarrow \cdots\Rightarrow |N^+(v_{2\ell+1}) \cap U_1|=0$.
Applying Claim \ref{cl:odd-even} again to $v_1v_{2\ell+1}$ and the condition $|N^+(v_1) \cap U_1| = 0$ forces $|N^+(v_{2\ell+1}) \cap U_2| = 0$. 
This contradicts Claim \ref{cl:0-epsilon} that exactly one of $N^+(v_{2\ell+1}) \cap U_1$ and $N^+(v_{2\ell+1}) \cap U_2$ is empty. 
Thus, $G(D - \{u_1, u_2\})$ is bipartite.

Since $g_{\text{odd}}(D) = 3$, there must be a triangle involving at least one of $u_1$ or $u_2$. The possible triangles are:
\begin{enumerate}
    \item $u_1 u_2 w$ for some $w \in V(D) \setminus \{u_1, u_2\}$.
    \item $u_i w w'$ for some $i \in [2]$ and $w, w' \in V(D) \setminus \{u_1, u_2\}$.
\end{enumerate}

  Consider the triangle $u_1 u_2 w$. By Claim \ref{cl:connect-to-u}, since $wu_1$ and $wu_2$ are edges, we have $|N^+(w) \cap U_1| = 0$ and $|N^+(w) \cap U_2| = 0$. This contradicts Claim \ref{cl:0-epsilon}. 

    Consider the triangle $u_i w w'$. 
    By Claim \ref{cl:connect-to-u}, we have $|N^+(w) \cap U_i| =|N^+(w') \cap U_i| = 0$. 
    Since $ww'$ is an edge, Claim \ref{cl:odd-even} applied with $|N^+(w) \cap U_i| = 0$ implies $|N^+(w') \cap U_{3-i}| = 0$. 
    Thus $|N^+(w') \cap U_1| = |N^+(w') \cap U_2| = 0$, contradicts Claim \ref{cl:0-epsilon}.

All cases yield a contradiction. Therefore, $D$ must contain $C_5'''$.
\end{proof}

We are now ready to prove the upper bound of \cref{thm:C5}.

\begin{theorem}\label{thm:bipartite} 
  Let $\varepsilon>0$ and $H\in\{C_5',C_5'',C_5'''\}$.  If $D$ is an $H$-free digraph with $\delta^+(D)> (1/3+\varepsilon)|D|$, then $D$ is bipartite.
\end{theorem}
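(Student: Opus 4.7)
I would proceed by contradiction. Suppose $D$ is $H$-free with $\delta^+(D)>(1/3+\varepsilon)n$ but not bipartite. Then $G(D)$ contains an odd cycle, so the odd girth $g:=g_{\mathrm{odd}}(D)$ is a finite odd integer at least $3$. I split on $g$.

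\textbf{Case $g=3$.} The hypotheses of \cref{lmm:og=3} are satisfied, so $D$ contains each of $C_5'$, $C_5''$, and $C_5'''$. In particular $H\subseteq D$, contradicting $H$-freeness. So in what follows I may assume $G(D)$ is triangle-free.

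\textbf{Case $g\ge 5$.} Here $\delta(G(D))\ge \delta^+(D)>n/3>2n/7$, and the classical Andr\'asfai bound for triangle-free graphs (a triangle-free graph with $\delta>2n/7$ has odd girth at most $5$) forces $g=5$. Pick an induced $5$-cycle $C=v_1v_2v_3v_4v_5$ in $G(D)$, which must be induced since a chord would create a triangle. A key leverage point is that in the triangle-free setting, for any arc $x\to y$ in $D$ the sets $N^+(x)$ and $N^+(y)$ are disjoint, each of size more than $(1/3+\varepsilon)n$, so
\[
|V(D)\setminus(N^+(x)\cup N^+(y)\cup\{x,y\})|<(1/3-2\varepsilon)n.
\]
This is the analogue of the complement bound used in parts (ii) and (iii) of the proof of \cref{lmm:og=3}, and it will severely restrict how vertices outside $N^+(x)\cup N^+(y)$ can distribute their out-arcs. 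In particular, if every edge of $C$ happens to be anti-parallel in $D$, then $D$ already contains $\overleftrightarrow{C_5}$ on $V(C)$ and hence every orientation of $C_5$ on these five vertices, including $H$. Otherwise at least one edge of $C$ is a single arc, and I would apply the complement bound along this arc to locate out-neighbors that, combined with four vertices of $C$, realize the specific arc pattern of $H$, in a manner parallel to how \cref{lmm:og=3} locates $C_5'$ starting from a $C_5''$ and $C_5'''$ starting from a pair of vertices with small common out-neighborhood.

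\textbf{Main obstacle.} The bulk of the work lies in Case $g\ge 5$: for each $H\in\{C_5',C_5'',C_5'''\}$ one must show that an induced $5$-cycle $C$ together with $\delta^+>(1/3+\varepsilon)n$ produces a copy of $H$, even when the actual orientation of $C$ matches none of the three target patterns. This will require a short case analysis on the orientation of $C$ (and especially on how many of its five edges are anti-parallel), with slightly different completion arguments for each $H$. Once the triangle-free case is handled, the theorem follows, since we have already excluded both the $g=3$ and the $g\ge 5$ possibilities under the assumption that $D$ is not bipartite.
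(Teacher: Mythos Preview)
Your overall case split is correct and matches the paper: $g_{\mathrm{odd}}=3$ is handled by \cref{lmm:og=3}, and the $g_{\mathrm{odd}}\ge 7$ case is disposed of by the standard counting argument (the paper does this directly rather than citing it, but it is the same bound $\delta\le 2n/|C|\le 2n/7$ you invoke).

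The gap is in the $g_{\mathrm{odd}}=5$ case, which you correctly flag as the main obstacle but do not actually resolve. Your plan to split on how many edges of $C$ are anti-parallel and then chase complement bounds is workable in principle but would be messy, and you have not shown how any particular target orientation is reached. The paper avoids all of this with a single clean observation. Since $G(D)$ is triangle-free, for any three \emph{consecutive} vertices $u,v,w$ on a $5$-cycle in $D$ the sets $N^+(u),N^+(v),N^+(w)$ are pairwise disjoint, so
\[
|N^+(u)\cap N^+(w)|\ge d^+(u)+d^+(w)-(n-d^+(v))\ge 3\varepsilon n>5,
\]
and one can pick $v'\notin V(C)$ with $u\to v'\leftarrow w$. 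Replacing $v$ by $v'$ yields a new oriented $5$-cycle on which the vertex at position $v$ is now a sink. Iterating this replacement around the cycle (four times for $C_5'$, two or three times for $C_5''$ and $C_5'''$) morphs \emph{any} initial orientation into each of the three targets, with no case analysis on the starting orientation and no use of anti-parallel edges. This is the missing idea; once you have it, the $g=5$ case is a few lines.
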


\begin{proof}[\textbf{Proof}]
Let $|D|=n$. We assume for contradiction that $D$ is not bipartite, meaning its underlying graph $G(D)$ contains an odd cycle.
The proof is divided into three cases based on the odd girth of $D$.

\begin{case}
    $g_{\text{odd}}(D)=3$.
\end{case}

It follows directly from \cref{lmm:og=3} that this case is impossible.

\begin{case}
    $g_{\text{odd}}(D)=5$.
\end{case}

In this case, the underlying graph $G(D)$ contains a 5-cycle but no triangles. Our strategy is to show that we can take any oriented 5-cycle and iteratively modify it by replacing vertices with their common out-neighbors, eventually ``morphing" it into one of the forbidden pentagon orientations.

\begin{claim}\label{cl:anti-path}
   Let $C$ be an oriented 5-cycle in $D$. For any three consecutive vertices $u,v,w$ on $C$, there is a vertex $v'\notin V(C)$ such that $u\rightarrow v'$ and $w\rightarrow v'$.
\end{claim}

\begin{pf}
    Let $u,v,w$ be three consecutive vertices on $C$.
    Since $g_{\text{odd}}(D)=5$, there are no triangles in $G(D)$.
    Consequently, we have $N^+(u)\cap N^+(v)=\varnothing$ and $N^+(v)\cap N^+(w)=\varnothing$.
    Thus $|N^+(u)\cap N^+(w)|\ge d^+(u)+d^+(w)-(n-d^+(v))\ge 3\varepsilon n>|C|$.
    That is, there is a vertex $v'\notin V(C)$ such that $u\rightarrow v'$ and $w\rightarrow v'$.
\end{pf}

Let $C$ be an oriented 5-cycle in $D$ with vertex set $S=\{v_1,v_2,v_3,v_4,v_5\}$.
By iteratively applying \cref{cl:anti-path}, we obtain vertices:
\begin{itemize}
    \item $v_5' \notin S$ with $v_1 \rightarrow v_5'$ and $v_4 \rightarrow v_5'$;
    \item $v_4' \notin S'$ with $v_3 \rightarrow v_4'$ and $v_5' \rightarrow v_4'$, where $S' = S \setminus \{v_5\} \cup \{v_5'\}$;
    \item $v_3' \notin S''$ with $v_2 \rightarrow v_3'$ and $v_4' \rightarrow v_3'$, where $S'' = S' \setminus \{v_4\} \cup \{v_4'\}$;
    \item $v_2' \notin S'''$ with $v_1 \rightarrow v_2'$ and $v_3' \rightarrow v_2'$, where $S''' = S'' \setminus \{v_3\} \cup \{v_3'\}$;
\end{itemize}
The subgraph induced by the resulting vertex set at each step contains an oriented 5-cycle.
Then the subgraph induced by $\{v_1,v_2',v_3',v_4',v_5'\}$ contains a $C_5'$.

\begin{figure}[ht]
\centering
\resizebox{0.8\linewidth}{!}{%
\begin{tikzpicture}[scale=1] 
  \coordinate (A) at (0,0); 
  \coordinate (B) at (2,0); 
  \coordinate (C) at (2.618,1.902); 
  \coordinate (D) at (1,3.078); 
  \coordinate (E) at (-0.618,1.902); 

  \draw[dashed] (A) -- (B) -- (C) -- (D) -- (E) -- cycle;

  \foreach \i in {A,B,C,D,E}
   \draw[fill=white, line width=0.701pt] (\i) circle (3pt);

  \coordinate (A1) at (0+5.0001,0); 
  \coordinate (B1) at (2+5.0001,0); 
  \coordinate (C1) at (2.618+5.0001,1.902); 
  \coordinate (D1) at (1+5.0001,3.078); 
  \coordinate (E1) at (-0.618+5.0001,1.902); 

  \draw[dashed] (A1) -- (B1) -- (C1) -- (D1) -- (E1) -- cycle;
  \draw[-latex, line width=1.501pt] (A1) -- (E1);
  \draw[-latex, line width=1.501pt] (D1) -- (E1);

  \foreach \i in {B1,C1,E1}
   \draw[fill=white, line width=0.701pt] (\i) circle (3pt);
   \foreach \i in {A1,D1}
   \draw[fill=white, draw=blue, line width=1.301pt] (\i) circle (3pt);

  \coordinate (A2) at (0+10.0001,0); 
  \coordinate (B2) at (2+10.0001,0); 
  \coordinate (C2) at (2.618+10.0001,1.902); 
  \coordinate (D2) at (1+10.0001,3.078); 
  \coordinate (E2) at (-0.618+10.0001,1.902); 

  \draw[dashed] (A2) -- (B2) -- (C2) -- (D2) -- (E2) -- cycle;
  \draw[-latex, line width=1.501pt] (A2) -- (E2);
  \draw[-latex, line width=1.501pt] (E2) -- (D2);
  \draw[-latex, line width=1.501pt] (C2) -- (D2);

  \foreach \i in {A2,B2,C2,D2,E2}
   \draw[fill=white, line width=0.701pt] (\i) circle (3pt);
   \foreach \i in {C2,E2}
   \draw[fill=white, draw=blue, line width=1.301pt] (\i) circle (3pt);

  \coordinate (A3) at (0+15.0001,0); 
  \coordinate (B3) at (2+15.0001,0); 
  \coordinate (C3) at (2.618+15.0001,1.902); 
  \coordinate (D3) at (1+15.0001,3.078); 
  \coordinate (E3) at (-0.618+15.0001,1.902); 

  \draw[dashed] (A3) -- (B3) -- (C3) -- (D3) -- (E3) -- cycle;
  \draw[-latex, line width=1.501pt] (A3) -- (E3);
  \draw[-latex, line width=1.501pt] (E3) -- (D3);
  \draw[-latex, line width=1.501pt] (D3) -- (C3);
  \draw[-latex, line width=1.501pt] (B3) -- (C3);

  \foreach \i in {A3,B3,C3,D3,E3}
   \draw[fill=white, line width=0.701pt] (\i) circle (3pt);
   \foreach \i in {B3,D3}
   \draw[fill=white, draw=blue, line width=1.301pt] (\i) circle (3pt);

  \coordinate (A4) at (0+20.0001,0); 
  \coordinate (B4) at (2+20.0001,0); 
  \coordinate (C4) at (2.618+20.0001,1.902); 
  \coordinate (D4) at (1+20.0001,3.078); 
  \coordinate (E4) at (-0.618+20.0001,1.902); 

  \draw[-latex, line width=1.501pt] (A4) -- (E4);
  \draw[-latex, line width=1.501pt] (E4) -- (D4);
  \draw[-latex, line width=1.501pt] (D4) -- (C4);
  \draw[-latex, line width=1.501pt] (C4) -- (B4);
  \draw[-latex, line width=1.501pt] (A4) -- (B4);

  \foreach \i in {A4,B4,C4,D4,E4}
   \draw[fill=white, line width=0.701pt] (\i) circle (3pt);
   \foreach \i in {A4,C4}
   \draw[fill=white, draw=blue, line width=1.301pt] (\i) circle (3pt);

  \coordinate (mid1) at (3.2,1.50001);   
  \coordinate (mid2) at (4,1.50001);      
  \draw[-Implies, double equal sign distance, thick] (mid1) -- (mid2);
  \coordinate (mid3) at (3.2+5,1.50001);   
  \coordinate (mid4) at (4+5,1.50001);      
  \draw[-Implies, double equal sign distance, thick] (mid3) -- (mid4);
  \coordinate (mid5) at (3.2+5+5,1.50001);   
  \coordinate (mid6) at (4+5+5,1.50001);      
  \draw[-Implies, double equal sign distance, thick] (mid5) -- (mid6);
  \coordinate (mid7) at (3.2+5+5+5,1.50001);   
  \coordinate (mid8) at (4+5+5+5,1.50001);      
  \draw[-Implies, double equal sign distance, thick] (mid7) -- (mid8);
  
\end{tikzpicture}
}
\caption{Find $C_5'$ starting from an arbitrary 5-cycle.}
\end{figure}
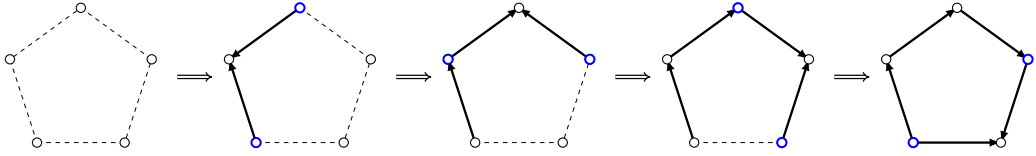

By iteratively applying \cref{cl:anti-path}, we can also find $C_5''$ and $C_5'''$ in $D$. 
The argument is analogous to the one used to obtain $C_5'$, so we omit the details and illustrate the construction in \cref{fig:C''and C'''}, where the third digraph contains $C_5'''$, and the fourth digraph contains $C_5''$.

\begin{figure}[ht]
\centering
\resizebox{0.6\linewidth}{!}{%
\begin{tikzpicture}[scale=1] 
  \coordinate (A) at (0,0); 
  \coordinate (B) at (2,0); 
  \coordinate (C) at (2.618,1.902); 
  \coordinate (D) at (1,3.078); 
  \coordinate (E) at (-0.618,1.902); 

  \draw[dashed] (A) -- (B) -- (C) -- (D) -- (E) -- cycle;

  \foreach \i in {A,B,C,D,E}
   \draw[fill=white, line width=0.701pt] (\i) circle (3pt);

  \coordinate (A1) at (0+5.0001,0); 
  \coordinate (B1) at (2+5.0001,0); 
  \coordinate (C1) at (2.618+5.0001,1.902); 
  \coordinate (D1) at (1+5.0001,3.078); 
  \coordinate (E1) at (-0.618+5.0001,1.902); 

  \draw[dashed] (A1) -- (B1) -- (C1) -- (D1) -- (E1) -- cycle;
  \draw[-latex, line width=1.501pt] (A1) -- (E1);
  \draw[-latex, line width=1.501pt] (D1) -- (E1);

  \foreach \i in {B1,C1,E1}
   \draw[fill=white, line width=0.701pt] (\i) circle (3pt);
   \foreach \i in {A1,D1}
   \draw[fill=white, draw=blue, line width=1.301pt] (\i) circle (3pt);

  \coordinate (A2) at (0+10.0001,0); 
  \coordinate (B2) at (2+10.0001,0); 
  \coordinate (C2) at (2.618+10.0001,1.902); 
  \coordinate (D2) at (1+10.0001,3.078); 
  \coordinate (E2) at (-0.618+10.0001,1.902); 

  \draw[dashed] (A2) -- (B2) -- (C2) -- (D2) -- (E2) -- cycle;
  \draw[-latex, line width=1.501pt] (A2) -- (E2);
  \draw[-latex, line width=1.501pt] (D2) -- (E2);
  \draw[-latex, line width=1.501pt] (D2) -- (C2);
  \draw[-latex, line width=1.501pt] (B2) -- (C2);

  \foreach \i in {A2,B2,C2,D2,E2}
   \draw[fill=white, line width=0.701pt] (\i) circle (3pt);
   \foreach \i in {B2,D2}
   \draw[fill=white, draw=blue, line width=1.301pt] (\i) circle (3pt);

  \coordinate (A3) at (0+15.0001,0); 
  \coordinate (B3) at (2+15.0001,0); 
  \coordinate (C3) at (2.618+15.0001,1.902); 
  \coordinate (D3) at (1+15.0001,3.078); 
  \coordinate (E3) at (-0.618+15.0001,1.902); 

  \draw[dashed] (A3) -- (B3) -- (C3) -- (D3) -- (E3) -- cycle;
  \draw[-latex, line width=1.501pt] (A3) -- (E3);
  \draw[-latex, line width=1.501pt] (E3) -- (D3);
  \draw[-latex, line width=1.501pt] (C3) -- (D3);
  \draw[-latex, line width=1.501pt] (B3) -- (C3);

  \foreach \i in {A3,B3,C3,D3,E3}
   \draw[fill=white, line width=0.701pt] (\i) circle (3pt);
   \foreach \i in {C3,E3}
   \draw[fill=white, draw=blue, line width=1.301pt] (\i) circle (3pt);

  \coordinate (mid1) at (3.2,1.50001);   
  \coordinate (mid2) at (4,1.50001);      
  \draw[-Implies, double equal sign distance, thick] (mid1) -- (mid2);
  \coordinate (mid3) at (3.2+5,1.50001);   
  \coordinate (mid4) at (4+5,1.50001);      
  \draw[-Implies, double equal sign distance, thick] (mid3) -- (mid4);
  \coordinate (mid5) at (3.2+5+5,1.50001);   
  \coordinate (mid6) at (4+5+5,1.50001);      
  \draw[-Implies, double equal sign distance, thick] (mid5) -- (mid6);
\end{tikzpicture}
}
\caption{Find $C_5''$ and $C_5'''$ starting from an arbitrary 5-cycle.}
\label{fig:C''and C'''}
\end{figure}
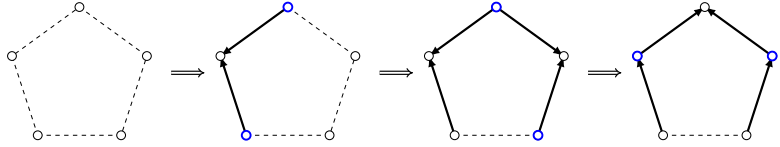

\begin{case}
    $g_{\text{odd}}(D)\ge 7$.
\end{case}

    Let $C$ be a minimum odd cycle in $G(D)$.
    Then $C$ is an induced cycle and $|C|\ge 7$.
    Moreover, $d^+(V(C),v)\le 2$ for every $v\in V(D)$; otherwise, there exists an odd cycle of length smaller than $|C|$.
    By double counting the arcs from $V(C)$ to $V(D)\setminus V(C)$, we have
\begin{align*}
   \sum_{u\in V(C)}(d^+(u)-2)\le \sum_{u\in V(C)}d^+(u,V(D)\setminus V(C))=\sum_{v\in V(D)\setminus V(C)} d^+(V(C),v)\le 2(n-|C|).
\end{align*}
  By the pigeonhole principle, there is a vertex $u\in V(C)$ such that $d^+(u)-2\le \frac{2(n-|C|)}{|C|}$.
  This implies that $d^+(u)\le \frac{2n}{|C|}\le \frac{2n}{7}<\frac{n}{3}$, a contradiction.

  Since all possible cases for the odd girth lead to a contradiction, $D$ must be bipartite.
\end{proof}

\begin{proof}[\textbf{Proof of Theorem \ref{thm:C5}}]
  The lower bound $\delta^+_\chi(H,2)\ge 1/3$ for each $H\in\{C_5',C_5'',C_5'''\}$ is given by \cref{prop:3.2} and  \cref{prop:3.3}.
  The upper bound $\delta^+_\chi(H,2)\le 1/3$ follows from \cref{thm:bipartite}.
  This completes the proof.
\end{proof}

\section{An application of the directed version Andr\'asfai-Erd\H{o}s-S\'os theorem}\label{sec5}

In this section, we provide an application of the directed version Andr\'asfai-Erd\H{o}s-S\'os theorem. 

First we introduce the famous regularity lemma.
Let $D$ be a digraph and $G=G(D)$ be its underlying graph.
Let $(A, B)$ be a pair of subsets of vertices of $G$. 
Let $e(A, B)$ denote the number of edges with one endpoint in $A$ and the other in $B$. 
The \emph{density} of the pair $(A, B)$ is defined as $d(A, B)=\frac{e(A, B)}{|A||B|}$, in the undirected case. 
For any $\varepsilon>0$, the pair $(A, B)$ is said to be \emph{$\varepsilon$-regular} if $|d(A, B)-d(X, Y)|<\varepsilon$ for every $X \subseteq A$ and $Y \subseteq B$ with $|X| \geq \varepsilon|A|$ and $|Y| \geq \varepsilon|B|$. Moreover, given $0 < d < 1$, we say that $(A, B)$ is \emph{$(\varepsilon, d)$-regular} if it is $\varepsilon$-regular and has density at least $d$.
A partition $V_0 \cup V_1 \cup \cdots \cup V_k$ of $V(G)$ is called an \emph{$\varepsilon$-regular partition} if $|V_0| \leq \varepsilon n,|V_1|=\cdots=|V_k|$, and all but at most $\varepsilon k^2$ of the pairs $(V_i, V_j)$, where $1\le i,j\le k,$ are $\varepsilon$-regular. 
The vertex sets $V_1, \ldots, V_k$ are called \textit{clusters}. 
Given clusters $V_1, \ldots, V_k$ and a digraph $D$, the \textit{$(\varepsilon, d)$-reduced digraph} $R$ is a digraph on the vertex set $[k]$.
There is an arc from $i$ to $j$ if and only if the pair $(V_i,V_j)$ is $(\varepsilon,d)$-regular.

The Diregularity lemma is a version of the Regularity lemma for digraphs due to Alon and Shapira \cite{alon2003testing}.
We will use the following minimum degree form of the diregularity lemma.

\begin{lemma}[Degree form of the Diregularity Lemma, see \cite{taylor2014regularity} and \cite{keevash2009exact}]\label{Diregularity}
    Let $0<2\varepsilon<d<\delta<1$, and let $k_0 \in \mathbb{N}$. There exists a constant $k_1=k_1(k_0, \varepsilon, \delta, d)$ such that the following holds. Every digraph $D$ on $n>k_1$ vertices, with minimum out-degree $\delta^+(D) \geq \delta n$, has an $(\varepsilon, d)$-reduced digraph $R$ on $k$ vertices, with $k_0 \leq k \leq k_1$ and $\delta^+(R) \geq(\delta-d-3\varepsilon) k$. 
\end{lemma}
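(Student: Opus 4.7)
The plan is to derive this degree form from the standard (undirected) Szemer\'edi Regularity Lemma applied to an auxiliary bipartite graph built from $D$, and then perform the customary cleaning step to extract a reduced digraph with the required minimum out-degree. The statement is standard, so the goal is to explain the shape of the argument rather than optimise constants.

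First, I would construct the \emph{bipartite double cover} $B = B(D)$ of $D$: take two disjoint copies $V', V''$ of $V(D)$, and include the edge $v'u''$ whenever $v \to u$ is an arc of $D$. Apply the bipartite version of the Szemer\'edi Regularity Lemma to $B$ with parameter $\varepsilon_0 \ll \varepsilon$ to obtain an $\varepsilon_0$-regular partition of $V' \cup V''$. Intersecting the partition of $V'$ with the partition of $V''$ (after identifying the two copies with $V(D)$) and sweeping leftover pieces into an exceptional set $V_0$, I obtain a partition $V_0 \cup V_1 \cup \cdots \cup V_k$ of $V(D)$ with $|V_0| \le \varepsilon n$, $|V_1| = \cdots = |V_k|$, $k_0 \le k \le k_1(k_0, \varepsilon, \delta, d)$, and such that all but at most $\varepsilon k^2$ of the ordered pairs $(V_i, V_j)$ are $\varepsilon$-regular in each direction. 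By the standard trick of moving clusters with too many irregular partners into $V_0$, I may further assume that every $V_i$ has at most $\varepsilon k$ clusters $V_j$ for which $(V_i, V_j)$ fails to be $\varepsilon$-regular.

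Next, define the $(\varepsilon, d)$-reduced digraph $R$ on $[k]$ by placing an arc from $i$ to $j$ exactly when $(V_i, V_j)$ is $\varepsilon$-regular with out-density at least $d$ from $V_i$ to $V_j$. To check the minimum out-degree bound, fix any $v \in V_i$ and partition $N^+(v)$ into arcs into $V_0$ (at most $\varepsilon n$), arcs inside $V_i$ (at most $n/k < \varepsilon n$ once $k_0$ is large enough), arcs into clusters $V_j$ with $(V_i, V_j)$ irregular (at most $\varepsilon k \cdot n/k = \varepsilon n$), arcs into clusters with out-density below $d$ (at most $d n$), and arcs into clusters $V_j$ with $i \to j$ in $R$. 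Since $d^+(v) \ge \delta n$, the last class contributes at least $(\delta - d - 3\varepsilon) n$ arcs, and since these are contained in at most $d_R^+(i) \cdot (n/k)$ vertices, we conclude $d_R^+(i) \ge (\delta - d - 3\varepsilon) k$ as desired.

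The main technical obstacle is the per-vertex accounting: the regularity lemma gives only an aggregate bound of $\varepsilon k^2$ irregular pairs, so individual clusters could a priori participate in many irregular pairs and ruin the uniform bound $d_R^+(i) \ge (\delta - d - 3\varepsilon)k$. This is handled by the post-processing step mentioned above, together with verifying that after absorbing the ``bad'' clusters into $V_0$ the exceptional set still has size at most $\varepsilon n$; once this is in place, the counting in the previous paragraph is routine.
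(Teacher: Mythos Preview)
The paper does not prove this lemma at all: it is stated with citations to \cite{taylor2014regularity} and \cite{keevash2009exact} and then used as a black box in the proof of Theorem~\ref{thm:app-of-AES}. So there is nothing in the paper to compare your argument against, and supplying a proof sketch goes beyond what the paper does.

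That said, your outline is the standard one and is essentially correct, with one slip worth fixing. In the final counting you write ``fix any $v\in V_i$'' and then bound the arcs from $v$ into low-density clusters by $dn$. This step is not valid vertex-by-vertex: the bound $d(V_i,V_j)<d$ only controls the \emph{total} number of arcs from $V_i$ to $V_j$, and an individual $v\in V_i$ could send many arcs to a particular $V_j$ while the pair still has low density. The fix is to do the accounting at the cluster level: sum $d^+(v)$ over $v\in V_i$, obtaining at least $\delta n|V_i|$ arcs leaving $V_i$; subtract at most $\varepsilon n|V_i|$ to $V_0$, at most $|V_i|^2$ inside $V_i$, at most $\varepsilon k\cdot(n/k)|V_i|$ to irregular partners, and at most $\sum_j d|V_i||V_j|\le dn|V_i|$ to low-density partners. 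The remaining arcs land in at most $d_R^+(i)$ clusters of size $n/k$, giving $d_R^+(i)\ge(\delta-d-2\varepsilon-1/k)k\ge(\delta-d-3\varepsilon)k$ once $k_0$ is large. With this correction your sketch is fine.
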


\begin{proof}[\textbf{Proof of Theorem \ref{thm:app-of-AES}}]
Fix  $1\gg\gamma\gg d\gg \varepsilon>0$.
Applying minimum degree form regularity lemma (\cref{Diregularity}) to $D$, we obtain a partition $V(D)=V_0\cup V_1\cup \cdots \cup V_k$ of $D$, together with an $(\varepsilon,d)$-reduced graph $R$ on the vertex set $V(R)=[k]$, such that $\delta^+(R)\geq(\frac{3r-7}{3r-4}+\frac{\gamma}{2})k$.    

Since the original digraph $D$ is $T_{r}[t]$-free, a standard result known as the Embedding Lemma implies that for a sufficiently small $\varepsilon$, the reduced digraph $R$ must be $T_{r}$-free.
By \cref{thm:directed-AES}, $R$ is $(r-1)$-colorable.

Suppose that $I$ is an independent set in $R$ and $V_I=\cup_{i\in I} V_i$.
The arcs of $D[V_I]$ can be categorized into the following three types:
\begin{itemize}
    \item arcs with both endpoints in $V_i$ for some $i \in I$. There are at most $2k\binom{n / k}{2}$ such arcs;
    \item arcs between distinct clusters $V_i$ and $V_j$, for some distinct $i, j \in I$ for which $(V_i, V_j)$ is not $\varepsilon$-regular. There are at most $2\varepsilon k^2 \frac{n^2}{k^2}$ such arcs;
    \item arcs between distinct clusters $V_i$ and $V_j$, for some distinct $i, j \in I$ for which $(V_i, V_j)$ is $\varepsilon$-regular but $d(V_i, V_j)<d$. There are at most $2d\binom{k}{2} \frac{n^2}{k^2}$ such arcs.
\end{itemize}

Summing up the arcs from these three types yields $|A(D[V_I])|\le (4\varepsilon+d)n^2$.
Since $V(R)$ can be divided into $r-1$ independent sets, this implies that to make $D$ $(r-1)$-partite it suffices to delete at most $(r-1)(4\varepsilon+d)n^2=o(n^2)$ arcs.
\end{proof}

\paragraph{Remark.} 
The $T_r[t]$-free condition in Theorem \ref{thm:app-of-AES} cannot be replaced by forbidding an arbitrary digraph that also has chromatic number $r$. 
Consider the following construction for $r=3$, which is a modification of \cref{fig:cons} (a). 
Let $D$ be a digraph with a vertex partition $A \cup B \cup C$, where $|A|=(1/5-2\varepsilon)n$ and $|B|=|C|=(2/5+\varepsilon)n$. 
The arc set consists of all arcs from $A$ to $B\cup C$, and all arcs between $B$ and $C$ (i.e., from $B$ to $C$ and from $C$ to $B$).
This digraph is $\overrightarrow{C_3}$-free with minimum out-degree $\delta^{+}(D) = (2/5+\varepsilon)n$, which meets the theorem's threshold. 
However, the digraph is not close to being bipartite, as one must delete at least $\Theta(n^2)$ arcs to make it so.

\section{Concluding Remarks}\label{sec6}

In contrast to the chromatic profile defined in \eqref{eq:profile}, which requires the chromatic number to be bounded by a specific integer $k$, a related notion is the chromatic threshold.
The \emph{directed chromatic threshold} of a digraph $H$ is defined as:
\begin{align*}
\delta^+_\chi(H):=
& \inf \{d: \exists~C=C(H, d)\text{ such that if $D$ is a digraph on $n$ vertices}, \\
& \text{with $\delta^+(D) \geq d n$ and $H \not \subseteq D$, then $\chi(D) \leq C$}\}.
\end{align*}

It is instructive to compare our results on the 2-coloring profile $\delta_{\chi}^{+}(H,2)$ with the known chromatic thresholds for the four pentagon orientations discussed.  Ding et al. \cite{Directed} established that $\delta_{\chi}^{+}(\overrightarrow{C_{5}})=1/2$ and $\delta_{\chi}^{+}(C_{5}')=1/4$.  In contrast, Koerts, Moore, and Spirkl \cite{koerts2025orientations} showed that the threshold is zero for the other two cases: $\delta_{\chi}^{+}(C_{5}'')=\delta_{\chi}^{+}(C_{5}''')=0$.
Notably, the values and even the relative ordering of these two parameters can differ significantly. For instance, we proved $\delta_{\chi}^{+}(C_{5}',2) = \delta_{\chi}^{+}(C_{5}'',2) = 1/3$, whereas their chromatic thresholds are $1/4$ and $0$, respectively. This complex relationship motivates further study, and we pose the following open questions:

\begin{question}
  Let $H_1$ and $H_2$ be two orientations of a graph $H$.  Does the relation $\delta_{\chi}^{+}(H_{1})>\delta_{\chi}^{+}(H_{2})$ imply that $\delta_{\chi}^{+}(H_{1},k)\ge\delta_{\chi}^{+}(H_{2},k)$ for every integer $k\ge 2$?
\end{question}

It is observed that \cref{cons3} and \cref{cons4} provide a lower bound of 1/3 for certain oriented odd cycles.

\begin{question}
   Determine the chromatic profile for every orientation of an arbitrary odd cycle $C_{2\ell+1}$.
\end{question}


We note that our constructions for the lower bounds contain vertices with zero in-degree. It is therefore a natural problem to study the chromatic profile where the minimum out-degree is replaced by the minimum semi-degree, a setting in which our constructions are no longer extremal.

Finally, Alon and Sudakov \cite{alon2006h} proved that if $G$ is a $K_{r}[t]$-free graph of order $n$ with minimum degree $\delta(G) \geq(\frac{3r-7}{3r-4}+\varepsilon) n$, then one can delete at most $O(n^{2-1 /(4 (r-1)^{2 / 3} t)})$ edges to make $G$ $(r-1)$-colorable.
This invites the question of whether a similar quantitative improvement can be made to our stability result.


\begin{question}
    Let $n$ be sufficiently large and $D$ be a $T_{r}[t]$-free digraph on $n$ vertices with minimum out-degree $\delta^{+}(D) \geq(\frac{3 r-7}{3 r-4}+\varepsilon) n$. Does there exist a constant $\alpha:=\alpha(r,t)>0$, such that one can delete $O(n^{2-\alpha})$ arcs to make $D$ an $(r-1)$-partite digraph?
\end{question}

\section*{Acknowledgments}
The author thanks Zilong Yan for fruitful and enlightening discussions in the early stages of this project and is grateful to Yuefang Sun for carefully reading the manuscript and Hong Liu for valuable writing suggestions.

\bibliographystyle{abbrv}
\bibliography{references.bib}

\end{document}